\documentclass[12pt]{amsart}
\usepackage{fullpage,amssymb}
\newtheorem{theorem}{Theorem}[section]
\newtheorem{lemma}[theorem]{Lemma}
\newtheorem{corollary}[theorem]{Corollary}
\newtheorem{proposition}[theorem]{Proposition}
\newtheorem*{notation*}{Notation}
\newtheorem*{p*}{Proposition~\ref{h.s.o.p}}
\theoremstyle{definition}
\newtheorem{definition}[theorem]{Definition}

\newtheorem{remark}[theorem]{Remark}
\newcommand{\M}{{\operatorname{Mat}}}

\newcommand{\N}{{\mathbb Z}_{\geq 0}}
\newcommand{\Z}{{\mathbb Z}}

\newcommand{\K}{K}
\newcommand{\Sl} {{\operatorname{SL} }}

\newcommand{\SL}{{\operatorname{SL}}}

\newcommand{\Hom}{\operatorname{Hom}}
\newcommand{\kar}{\operatorname{char}}

\newcommand{\Q}{{\mathbb Q}}
\newcommand{\GL}{\operatorname{GL}}
\newcommand{\Rep}{\operatorname{Rep}}
\newcommand{\SI}{\operatorname{SI}}

\newcommand{\Proj}{\operatorname{Proj}}

\usepackage{multirow}
\usepackage{amsmath}
\usepackage{comment}
\usepackage{tikz}
\usepackage{mathtools}
\usepackage{arydshln}

\title{Degree bounds for semi-invariant rings of quivers}

\author{Harm Derksen and Visu Makam}

\thanks{The first author was supported by NSF grant DMS-1302032 and the second author was supported by NSF grant DMS-1361789}

\begin{document}

\maketitle

\begin{abstract}
We use recent results on matrix semi-invariants to give degree bounds on generators for the ring of semi-invariants for quivers with no oriented cycles.
\end{abstract}

\section{Introduction}
In \cite{DM}, we studied the left-right action of $\SL_n \times \SL_n$ on $m$-tuples of $n \times n$ matrices. Among other things, we proved bounds for the degree of generators for the invariant ring. This ring of invariants can be seen as the ring of semi-invariants for the $m$-Kronecker quiver for the dimension vector $(n,n)$.  
In this paper, we obtain bounds for the degree of generators for the ring of semi-invariants for a quiver with no oriented cycles.

\subsection{Quiver representations} A quiver is just a directed graph. Formally, a quiver $Q$ is a pair $(Q_0,Q_1)$, where $Q_0$ is a finite set of vertices, and $Q_1$ is a finite set of arrows. For each arrow $a \in Q_1$, we denote its head and tail by $ha$ and $ta$ respectively. We fix an infinite field $K$. A representation $V$ of $Q$ over $K$ is a collection of finite dimensional $K$-vector spaces $V(x)$, $x\in Q_0$ together with a collection of $K$-linear maps $V(a):V(ta)\to V(ha)$, $a\in Q_1$. The dimension vector of $V$ is the function $\alpha:Q_0\to \N$ such that
$\alpha(x)=\dim V(x)$ for all $x \in Q_0$.

Let $\M_{p,q}$ denote the set of $p \times q$ matrices over $K$. For a dimension vector $\alpha\in \N^{Q_0}$, we define its representation space by:
$$\Rep(Q,\alpha)=\prod_{a\in Q_1}\M_{\alpha(ha),\alpha(ta)}.$$ 
 If $V$ is a representation with dimension vector $\alpha$
and we identify $V(x)\cong K^{\alpha(x)}$ for all $x$, then $V$ can be viewed as an element of $\Rep(Q,\alpha)$.
Consider the group $\GL(\alpha)=\prod_{x\in Q_0} \GL_{\alpha(x)}$ and its subgroup $\Sl(\alpha)=\prod_{x\in Q_0}\Sl_{\alpha(x)}$.
The group $\GL(\alpha)$ 
acts on $\Rep(Q,\alpha)$ by:
$$
(A(x)\mid x\in Q_0)\cdot (V(a)\mid a\in Q_1)=(A(ha)V(a)A(ta)^{-1}\mid a\in Q_1).
$$
For $V\in \Rep(Q,\alpha)$, choosing a different basis means acting by the group $\GL(\alpha)$. The $\GL(\alpha)$-orbits in $\Rep(Q,\alpha)$ correspond to isomorphism classes of representations of dimension $\alpha$.

\subsection{Invariants for quiver representations}
The group $\GL_n$ acts by simultaneous conjugation on $\M_{n,n}^m$, the space of $m$-tuples of $n \times n$ matrices. Artin conjectured that in characteristic $0$, the invariant ring is generated by traces of words in the matrices. Procesi proved this conjecture, and also showed that invariants of degree $\leq 2^n -1$ generate the ring of invariants (see \cite{Procesi}). It was shown by Razmyslov that invariants of degree $\leq n^2$ suffice (see \cite[final remark]{Raz}). A concise account of the above results can also be found in \cite{Formanek}. 

Le Bruyn and Procesi generalized the results to arbitrary quivers. They proved that the ring of invariants $\K[\Rep(Q,\alpha)]^{\GL(\alpha)}$ is generated by traces along oriented cycles. Using the aforementioned bound, they showed that the invariants of degree $\leq N^2$ generate the ring, where $N = \sum_i \alpha_i$ (see \cite{LP}).

\subsection{Semi-invariants for quiver representations}
From Le Bruyn and Procesi's results described above, we see that a quiver with no oriented cycles has no non-trivial invariants. However, the ring of semi-invariants $\SI(Q,\alpha) = \K[\Rep(Q,\alpha)]^{\SL(\alpha)}$ could still be interesting. 

A multiplicative character of the group $\GL_\alpha$ is of the form
$$
\chi_\sigma:(A(x)\mid x\in Q_0)\in \GL_\alpha\mapsto \prod_{x\in Q_0}\det(A(x))^{\sigma(x)}\in K^\star,
$$
where $\sigma:Q_0\to \Z$ is called the weight of the character $\chi_\sigma$. Define 
$$\SI(Q,\alpha)_\sigma=\{f\in K[\Rep(Q,\alpha)]\mid \forall A\in \GL(\alpha)\ A\cdot f=\chi_\sigma(A) f\}.$$
Then the ring of semi-invariants has a weight space decomposition 
 $$\SI(Q,\alpha)=\bigoplus_{\sigma}\SI(Q,\alpha)_\sigma.$$

 If $\sigma\cdot \alpha=\sum_{x\in Q_0}\sigma(x)\alpha(x)\neq 0$, then $\SI(Q,\alpha)_\sigma=0$. Assume that $\sigma\cdot \alpha=0$. We can write $\sigma=\sigma_+-\sigma_-$ where $\sigma_+(x)=\max\{\sigma(x),0\}$ and $\sigma_-(x)=\max\{-\sigma(x),0\}$. Define $|\sigma|_\alpha = \sigma_+\cdot \alpha=\sigma_-\cdot \alpha$.

Now we define an $n\times n$ linear matrix
$$A:\bigoplus_{x\in Q_0}V(x)^{\sigma_+(x)}\to \bigoplus_{x\in Q_0}V(x)^{\sigma_-(x)}
$$
where each block $\Hom(V(x),V(y))$ is of the form $t_1V(p_1)+\cdots+t_rV(p_r)$, $t_1,t_2,\dots,t_r$ are indeterminates and $p_1,p_2,\dots,p_r$ are all paths from $x$ to $y$.
We use different indeterminates for the different blocks, so the  linear matrix has $m=\sum_{x\in Q_0}\sum_{y\in Q_0} \sigma_+(x)b_{x,y}\sigma_-(y)$ indeterminates where $b_{x,y}$ is the number of paths from $x$ to $y$. We can write $A=t_1X_1+\cdots+t_mX_m$ with $X_1,\dots,X_m\in \M_{n,n}$.
We have the following result (see~\cite[Corollary 3]{DW}, \cite{DZ} and  \cite{SVd}).
\begin{theorem} \label{si}
The space $\SI(Q,\alpha)_\sigma$ is spanned by $\det(t_1X_1+\cdots+t_mX_m)$ with $t_1,\dots,t_m\in K$.
\end{theorem}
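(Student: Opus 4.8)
The plan is to prove the two inclusions separately, dispatching the equivariance quickly and isolating the spanning statement as the genuine difficulty. Write $n=|\sigma|_\alpha$, so that $A=t_1X_1+\cdots+t_mX_m$ is the $n\times n$ linear matrix whose block from the $V(x)^{\sigma_+(x)}$ part of the source to the $V(y)^{\sigma_-(y)}$ part of the target has entries $\sum_i t_iV(p_i)$ summed over paths $p_i\colon x\to y$. First I would check that for every specialization $t\in K^m$ the function $\det(A)$ lies in $\SI(Q,\alpha)_\sigma$. The key point is that for a path $p\colon x\to y$ the composite $V(p)$ transforms under $g=(g(x))\in\GL(\alpha)$ by $V(p)\mapsto g(y)V(p)g(x)^{-1}$, since the interior factors cancel telescopically. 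Hence $A\mapsto D_-\,A\,D_+^{-1}$, where $D_\pm$ is block diagonal with $g(x)$ repeated $\sigma_\pm(x)$ times, so $\det(A)$ picks up the factor $\prod_x\det(g(x))^{\sigma_-(x)-\sigma_+(x)}=\chi_{-\sigma}(g)$ under $V\mapsto g\cdot V$; the inverse in the action on functions then gives $g\cdot\det(A)=\chi_\sigma(g)\det(A)$. Linear combinations of such determinants over specializations of $t$ therefore remain in $\SI(Q,\alpha)_\sigma$, giving one inclusion.

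Next I would reinterpret these determinants as Schofield semi-invariants, which is what makes completeness accessible. Let $P_x$ be the indecomposable projective representation at $x$, normalized so that $\Hom(P_x,V)\cong V(x)$ naturally in $V$, and put $P_0=\bigoplus_x P_x^{\oplus\sigma_+(x)}$ and $P_1=\bigoplus_x P_x^{\oplus\sigma_-(x)}$. A homomorphism $f\colon P_1\to P_0$ amounts precisely to a block matrix whose entries are linear combinations of paths in $Q$, and the full tuple of its coefficients is exactly $(t_1,\dots,t_m)$; applying $\Hom(-,V)$ yields $\Hom(f,V)\colon\bigoplus_x V(x)^{\sigma_+(x)}\to\bigoplus_x V(x)^{\sigma_-(x)}$, which is visibly $A$ evaluated at that $t$. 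Both spaces have dimension $n$, which is why $A$ is square, and for $W=\operatorname{coker}(f)$ the determinant $\det(A)|_t=\det\Hom(f,V)$ is the Schofield semi-invariant $c^W$. As $t$ (equivalently $f$) ranges over all choices, $W$ ranges over all representations admitting a presentation of this shape, so the span of the specialized determinants equals the span of these $c^W$.

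The remaining and main task is completeness: every $h\in\SI(Q,\alpha)_\sigma$ is a linear combination of the $c^W$. Here I would reduce to a classical first fundamental theorem. Since $\det(A)$ depends on $V$ only through the path maps $V(p)$, the natural move is to pass to the bipartite quiver $Q'$ whose sources are the $x$ with $\sigma(x)>0$ and whose sinks are the $y$ with $\sigma(y)<0$, with one arrow $x\to y$ for each path $x\to y$ in $Q$ and dimension vector induced by $\alpha$ together with the multiplicities $\sigma_\pm$; the $\GL(\alpha)$-equivariant assignment $V\mapsto(V(p))_p$ pulls weight-$\sigma$ semi-invariants of $Q'$ back to those of $(Q,\alpha)$, and on $Q'$ these are exactly $\SL\times\SL$-type invariants of tuples of matrices, for which the determinants $\det(t_1X_1+\cdots+t_mX_m)$ are known to span. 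The genuine obstacle, and the content carried by \cite{DW,DZ,SVd}, is twofold: first, that \emph{every} weight-$\sigma$ semi-invariant of $(Q,\alpha)$ really descends through the path map to $Q'$ rather than requiring the individual arrow maps; and second, the first fundamental theorem for the resulting matrix problem over an arbitrary infinite field. In characteristic zero the Cauchy-formula decomposition of $K[\Rep(Q,\alpha)]$ into Schur functors along the arrows gives a clean route, but the characteristic-free version — which is the hard part — I would handle via Zubkov's description of the semi-invariants of the left–right action and bideterminant techniques, where all the subtlety concentrates.
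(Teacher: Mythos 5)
The paper does not actually prove Theorem~\ref{si}: it is imported wholesale from \cite{DW}, \cite{DZ} and \cite{SVd}, so there is no internal argument to compare you against. Your first inclusion is correct and complete: the telescoping $V(p)\mapsto g(y)V(p)g(x)^{-1}$ for a path $p\colon x\to y$, the resulting transformation $A\mapsto D_-AD_+^{-1}$, and the sign bookkeeping yielding $g\cdot\det(A)=\chi_\sigma(g)\det(A)$ under the convention $(g\cdot f)(V)=f(g^{-1}\cdot V)$ all check out, as does the identification of the specialized determinants with Schofield's semi-invariants $c^W=\det\Hom(f,V)$ for $f\colon P_1\to P_0$ (using $\Hom(P_y,P_x)\cong P_x(y)=$ span of paths from $x$ to $y$, so that the coefficient tuple of $f$ is exactly $(t_1,\dots,t_m)$).

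The completeness direction is where all of the content of the theorem lives, and your sketch does not close it; it compresses two genuinely hard claims into assertions. First, the claim that every $h\in\SI(Q,\alpha)_\sigma$ depends on $V$ only through the path maps $V(p)$: this requires applying the characteristic-free first fundamental theorem for $\GL_{\alpha(z)}$ at each vertex $z$ with $\sigma(z)=0$, in a suitable order, and checking that the composites so produced are exactly the path maps between the remaining vertices --- this is the reduction step of \cite{SVd} and is not automatic from equivariance of the path map $V\mapsto (V(p))_p$, whose image is a proper subvariety of $\Rep(Q',\alpha')$. Second, even after this reduction the target is a bipartite quiver with several sources and sinks, unequal dimensions, and arbitrary multiplicities $\sigma_{\pm}$; the assertion that its weight-$\sigma$ semi-invariants are spanned by determinants of block linear matrices is not literally the left--right $\SL_n\times\SL_n$ first fundamental theorem but a genuine generalization of it, i.e.\ the bipartite case of the very theorem being proved, and in positive characteristic it is exactly the bideterminant/good-filtration work of \cite{DZ}. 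Your closing sentence concedes both points by deferring to those references, which is precisely what the paper's citation does; so the proposal should be read as a correct proof of the easy inclusion plus an accurate roadmap of the known proofs, not as a proof of the spanning statement.
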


The ring $\SI(Q,\alpha)$ is the ring of invariants for the action of the reductive group $\SL_\alpha$ on the vector space $\Rep(Q,\alpha)$. The ring of regular functions on $\Rep(Q,\alpha)$ is a polynomial ring, and has a natural grading by total degree. The group $\SL_\alpha$ acts on each graded piece, and thus the ring $\SI(Q,\alpha)$ inherits the grading:
$$\SI(Q,\alpha) = \bigoplus_{d \in \Z_{\geq 0}} \SI(Q,\alpha)_{[d]}.$$
Note that we use $\SI(Q,\alpha)_{[d]}$ to denote the $d^{th}$ graded piece with respect to the total degree grading, whereas we use $\SI(Q,\alpha)_\sigma$ to denote the weight space corresponding to the weight $\sigma$.
Theorem~\ref{si} gives a determinantal description for the semi-invariants of a given weight. However, such a description of the invariants in terms of the total degree grading is not readily available. On the other hand, several results in Computational Invariant Theory apply to the total degree grading, and not directly to the weight space decomposition. The null cone $\mathcal{N} \subseteq \Rep(Q,\alpha)$ is defined as the zero set of the non-constant homogeneous invariants. From the description in Theorem~\ref{si}, it follows that the null cone is the zero set of the semi-invariants for all nonzero weights. The null cone is an important tool in Computational Invariant Theory. 

Recall that $||\alpha||_1 = \sum\limits_{i \in Q_0} |\alpha_i|$ and $||\alpha||_2 = (\sum\limits_{i \in Q_0} \alpha_i^2)^{1/2}$. We prove:

\begin{theorem} \label{null cone bound}
Let $Q = (Q_0,Q_1)$ be a quiver with no oriented cycles, and let $|Q_0| = n$. Then the null cone for the action of $\SL_\alpha$ on $\Rep(Q,\alpha)$ is defined by semi-invariants for nonzero weights $\sigma$ such that $|\sigma|_\alpha \leq \displaystyle \frac{||\alpha||_1^{2n}}{4(n-1)^{2n-2}}$.
\end{theorem}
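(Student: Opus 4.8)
The plan is to translate non-membership in the null cone into a noncommutative full-rank statement for the linear matrix attached to a weight, and then to locate a small weight witnessing it. Fix $V \in \Rep(Q,\alpha)$ and, for a weight $\sigma$ with $\sigma\cdot\alpha = 0$, let $\mathcal{A}_\sigma(V)$ denote the matrix space spanned by the evaluations $X_1(V),\dots,X_m(V)$ of the linear matrix of Theorem~\ref{si}. By that theorem, $\SI(Q,\alpha)_{k\sigma}$ is nonzero at $V$ exactly when the weight-$k\sigma$ linear matrix is generically nonsingular at $V$; specializing its indeterminates recovers the $k$-th blow-ups of $\mathcal{A}_\sigma(V)$ (replacing each $X_i(V)$ by $T_i\otimes X_i(V)$ for generic $k\times k$ matrices $T_i$), so that some positive multiple of $\sigma$ carries a semi-invariant nonvanishing at $V$ if and only if $\mathcal{A}_\sigma(V)$ has full noncommutative rank. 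Since the null cone is the common zero locus of the nonconstant semi-invariants, $V \notin \mathcal{N}$ if and only if $\mathcal{A}_\sigma(V)$ has full noncommutative rank for some nonzero $\sigma$. It therefore suffices to show that whenever such a $\sigma$ exists, one can be found with $|\sigma|_\alpha$ bounded as claimed, after accounting for the blow-up degree.

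The first half of the bound comes from a polyhedral estimate on semistable weights. By King's criterion, $V$ is $\sigma$-semistable exactly when $\sigma\cdot\alpha = 0$ and $\sigma\cdot\underline{\dim}\,W \le 0$ (with the appropriate sign convention) for every subrepresentation $W \subseteq V$. As $W$ ranges over the finitely many dimension vectors of subrepresentations, all satisfying $0 \le \underline{\dim}\,W \le \alpha$ entrywise, these conditions cut out a rational polyhedral cone $C(V)$ in the hyperplane $\{\sigma : \sigma\cdot\alpha=0\} \cong \R^{n-1}$ whose facet normals have entries bounded by $\|\alpha\|_\infty$. If $C(V)\neq 0$, I would pick a primitive generator $\sigma$ of an extreme ray: it is orthogonal to $\alpha$ and to $n-2$ dimension vectors $\beta_1,\dots,\beta_{n-2}$ with $0\le\beta_i\le\alpha$, so its coordinates are the maximal minors of the $(n-1)\times n$ nonnegative integer matrix with rows $\alpha,\beta_1,\dots,\beta_{n-2}$. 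Writing $|\sigma|_\alpha = \tfrac12\sum_{x}\alpha(x)\,|\sigma(x)|$ and expanding it as (half) an $n\times n$ determinant with a signed copy of $\alpha$ as its top row, a Hadamard-type estimate that exploits the nonnegativity of the rows together with the arithmetic--geometric mean inequality on the row sums (each of $\ell_1$-norm at most $\|\alpha\|_1$) yields $|\sigma|_\alpha \le \|\alpha\|_1^{n}/\bigl(2(n-1)^{n-1}\bigr)$.

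The second half is the degree of the blow-up. Having produced a small-entry weight $\sigma$ for which $\mathcal{A}_\sigma(V)$ has full noncommutative rank, I would invoke the regularity (blow-up) results on matrix semi-invariants from~\cite{DM}: full noncommutative rank of a space of $p\times p$ matrices is already witnessed by a blow-up of size at most $p$. Here $p = |\sigma|_\alpha$ is the size of the linear matrix, so $\SI(Q,\alpha)_{k\sigma}$ is nonzero at $V$ for some $k \le |\sigma|_\alpha$. The weight $k\sigma$ then satisfies $|k\sigma|_\alpha = k\,|\sigma|_\alpha \le |\sigma|_\alpha^{2} \le \|\alpha\|_1^{2n}/\bigl(4(n-1)^{2n-2}\bigr)$, and it carries a semi-invariant not vanishing at $V$. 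Running this over all $V\notin\mathcal{N}$ shows that the semi-invariants of weight $\sigma$ with $|\sigma|_\alpha$ below this bound already cut out the null cone.

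The main obstacle is the polyhedral/determinantal estimate in the second paragraph. The naive Hadamard or spectral-radius bounds on the relevant minors only give $|\sigma|_\alpha$ of order $\|\alpha\|_1^{n}$, losing the crucial factor $(n-1)^{n-1}$ in the denominator; extracting that factor requires using the nonnegativity of the dimension vectors and balancing the row contributions by the arithmetic--geometric mean inequality, and it is here that the precise constant in the statement is determined. The reduction to noncommutative rank and the appeal to \cite{DM} for the blow-up size are then comparatively formal, the only subtlety being the relation between the weight-$k\sigma$ linear matrix and the $k$-th blow-up of $\mathcal{A}_\sigma(V)$.
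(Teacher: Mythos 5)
Your overall strategy --- pass to an extreme ray $L$ of the semistability cone $\operatorname{C}(V)$, bound the primitive integral generator $\sigma$ of that ray via maximal minors, and then replace $\sigma$ by a multiple of size about $|\sigma|_\alpha$ to actually produce a nonvanishing semi-invariant --- is the same as the paper's, and your first and third paragraphs (the noncommutative-rank/blow-up reformulation and the resulting factor $|\sigma|_\alpha^2$) amount to the paper's appeal to Proposition~\ref{existssemi}(1). But the step you yourself flag as the main obstacle is a genuine gap, and the device you sketch does not close it. You realize $\Q L$ as the kernel of the $(n-1)\times n$ matrix with rows $\alpha,\beta_1,\dots,\beta_{n-2}$, where the $\beta_i$ are dimension vectors of subrepresentations cutting out facets through $L$, so all you know is $0\le\beta_i\le\alpha$ entrywise. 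Applying Hadamard and then AM--GM to rows ``each of $\ell_1$-norm at most $||\alpha||_1$'' bounds the product of the $n-1$ relevant row norms only by $||\alpha||_1^{\,n-1}$, not by $\left(||\alpha||_1/(n-1)\right)^{n-1}$: to gain the factor $(n-1)^{n-1}$ you need the row norms to \emph{sum} to at most $||\alpha||_1$, i.e., the rows themselves to sum entrywise to at most $\alpha$, and facet normals of $\operatorname{C}(V)$ have no reason to satisfy that. As written, your estimate degrades to $|\sigma|_\alpha\le\tfrac12||\alpha||_1^{\,n}$ and the final bound loses the entire $(n-1)^{2n-2}$ in the denominator.

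The paper's fix is to change the set of rows. Since $\sigma$ lies on the ray $L\subset\operatorname{C}(V)$, the representation $V$ is $\sigma$-semistable, hence admits a Jordan--H\"older filtration whose $\sigma$-stable composition factors have dimension vectors $\alpha_1,\dots,\alpha_m$ that are nonnegative, satisfy $\sigma(\alpha_i)=0$, and \emph{sum to} $\alpha$. One then selects $n-1$ linearly independent vectors $\beta_1,\dots,\beta_{n-1}$ among them whose common kernel is $\Q L$ (the equation $\sigma(\alpha)=0$ is implied since $\alpha$ is their ambient sum). Because $\sum_i||\beta_i||_1\le||\alpha||_1$, Hadamard plus AM--GM (this is exactly Proposition~\ref{linear-algebra}) bounds each coordinate of $\sigma$ by $\left(||\alpha||_1/(n-1)\right)^{n-1}$, which combined with $\sum_x\alpha(x)|\sigma(x)|=2|\sigma|_\alpha$ gives the intermediate bound $|\sigma|_\alpha\le ||\alpha||_1^{\,n}/\bigl(2(n-1)^{n-1}\bigr)$. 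That substitution of composition factors for facet normals is the one idea your sketch is missing; with it, the rest of your argument goes through as you describe.
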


In characteristic $0$, bounds on the degree of the invariants defining the null cone can be translated into bounds for the degree of generating invariants.
\begin{theorem} \label{main}
Let $Q = (Q_0,Q_1)$ be a quiver with no oriented cycles, and let $|Q_0| = n$. Assume $\kar K =0$ and let $r$ be the Krull dimension of $\SI(Q,\alpha)$. The ring $\SI(Q,\alpha)$ is generated by semi-invariants of weights $\sigma$ with $$|\sigma|_\alpha \leq   \displaystyle \frac{3rn^2 ||\alpha||_1^{4n}}{128(n-1)^{4n-4}}.$$
\end{theorem}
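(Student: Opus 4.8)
The plan is to deduce Theorem~\ref{main} from the null cone bound (Theorem~\ref{null cone bound}) by invoking Derksen's degree bound from computational invariant theory, which in characteristic $0$ converts a bound on the degrees of invariants cutting out the null cone into a bound on the degrees of algebra generators of the whole invariant ring. The only genuinely quiver-specific work is to translate between the two natural ways of measuring the size of a semi-invariant: its total degree $d$ as a polynomial on $\Rep(Q,\alpha)$, and the weight measure $|\sigma|_\alpha$. I would first record two comparisons between these quantities, and then feed them into Derksen's theorem.

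For the first comparison, let $f \in \SI(Q,\alpha)_\sigma$ be homogeneous of total degree $d$ with multidegree $(d_a)_{a\in Q_1}$, so $d = \sum_a d_a$. Restricting the defining relation $A\cdot f = \chi_\sigma(A)f$ to the scalar subtorus $A(x) = \lambda_x I$ yields the balance equations $\sum_{ha=x} d_a - \sum_{ta=x} d_a = \alpha(x)\sigma(x)$ at every vertex $x$. Summing over the vertices $x$ with $\sigma(x)>0$, dropping the subtracted (nonnegative) terms and extending the remaining sum over all arrows, gives $|\sigma|_\alpha = \sigma_+\cdot\alpha \le \sum_a d_a = d$, so the weight measure never exceeds the total degree. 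For the reverse comparison I would use Theorem~\ref{si}: every element of $\SI(Q,\alpha)_\sigma$ is a linear combination of determinants $\det(t_1X_1+\cdots+t_mX_m)$ of the $|\sigma|_\alpha \times |\sigma|_\alpha$ linear matrix $A$. Each entry of $A$ is a $K$-linear combination of path maps $V(p)$, and since $Q$ is acyclic with $|Q_0| = n$ every path has length at most $n-1$, so each entry is a polynomial of total degree at most $n-1$ in the coordinates on $\Rep(Q,\alpha)$. Expanding the $|\sigma|_\alpha \times |\sigma|_\alpha$ determinant shows that every such semi-invariant has total degree at most $(n-1)|\sigma|_\alpha \le n|\sigma|_\alpha$. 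Because the weight grading and the total degree grading are compatible, this bound passes to each total-degree-homogeneous piece, so all of $\SI(Q,\alpha)_\sigma$ lives in total degrees $\le n|\sigma|_\alpha$.

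With these in hand I would run the following argument. By Theorem~\ref{null cone bound} the null cone $\mathcal N \subseteq \Rep(Q,\alpha)$ is cut out set-theoretically by semi-invariants of nonzero weights $\sigma$ with $|\sigma|_\alpha \le B := \frac{||\alpha||_1^{2n}}{4(n-1)^{2n-2}}$; by the second comparison each of these has total degree at most $\gamma := nB$. Thus $\mathcal N$ is defined by invariants of total degree at most $\gamma$. Since $\kar K = 0$, the group $\SL_\alpha$ is linearly reductive, so Derksen's theorem applies to the graded ring $\SI(Q,\alpha) = \bigoplus_d \SI(Q,\alpha)_{[d]}$: if the null cone is defined by invariants of degree at most $\gamma$ and $r$ is the Krull dimension of the invariant ring, then the ring is generated by invariants of total degree at most $\tfrac{3}{8}r\gamma^2$. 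Substituting $\gamma = nB$ reproduces the total-degree bound $\tfrac{3}{8}rn^2B^2 = \frac{3rn^2||\alpha||_1^{4n}}{128(n-1)^{4n-4}}$, exactly the number in the statement.

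Finally, since $\SI(Q,\alpha)$ is graded by weight, I may choose the generating set produced by Derksen's theorem to consist of weight-homogeneous semi-invariants, each of total degree at most the bound above. Applying the first comparison $|\sigma|_\alpha \le d$ to each generator converts this total-degree bound into the desired weight bound $|\sigma|_\alpha \le \frac{3rn^2||\alpha||_1^{4n}}{128(n-1)^{4n-4}}$, completing the proof. I expect the main obstacle to be bookkeeping rather than conceptual: pinning down Derksen's theorem in the precise form (with the constant $\tfrac{3}{8}$ and the Krull dimension $r$) and checking that the two degree/weight comparisons are sharp enough that the factor $n$ coming from the acyclic path-length bound reproduces the stated constant; all of the substantive invariant-theoretic content is already packaged in Theorem~\ref{null cone bound} and in Derksen's theorem.
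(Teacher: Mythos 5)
Your proposal is correct and follows essentially the same route as the paper: convert the null cone weight bound to a total-degree bound via the determinantal description (the paper's Lemma~\ref{gradings}), apply Derksen's $\beta \leq \frac{3}{8}r\gamma^2$, and convert back using $|\sigma|_\alpha \leq d$. The only cosmetic difference is that you derive the inequality $|\sigma|_\alpha \leq d$ from the torus balance equations rather than from the determinantal spanning set, which is a valid (and arguably cleaner) way to get that half of the comparison.
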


Note that $\dim(\SI(Q,\alpha)) \leq \dim \Rep(Q,\alpha)$, which depends on $Q_0$ and $Q_1$. We show that using a theorem of Weyl, we can give a bound that depends only on $n = |Q_0|$ and $\alpha$. 

\begin{corollary} \label{independent}
Let $Q = (Q_0,Q_1)$ be a quiver with no oriented cycles, and let $|Q_0| = n$ Assume $\kar K = 0$. The ring $\SI(Q,\alpha)$ is generated by semi-invariants of weights $\sigma$ with $$|\sigma|_\alpha \leq \displaystyle \frac{3}{256} \left(||\alpha||_1^2 - ||\alpha||_2^2\right) \frac{n^2 ||\alpha||_1^{4n}}{(n-1)^{4n-4}}.$$
\end{corollary}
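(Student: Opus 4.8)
The plan is to remove the dependence on the Krull dimension $r = \dim \SI(Q,\alpha)$ appearing in Theorem~\ref{main}. Every other quantity in that bound ($n$ and $||\alpha||_1$) already depends only on $Q_0$ and $\alpha$, so it is enough to produce an upper bound for $r$ in terms of $n$ and $\alpha$ and substitute it. Concretely, I would aim to show that the relevant dimension is at most $\sum_{i<j}\alpha_i\alpha_j = \tfrac12\big(||\alpha||_1^2 - ||\alpha||_2^2\big)$: substituting $r \le \tfrac12(||\alpha||_1^2-||\alpha||_2^2)$ into $\tfrac{3rn^2||\alpha||_1^{4n}}{128(n-1)^{4n-4}}$ reproduces exactly the expression claimed in the Corollary, so the whole problem reduces to this one dimension estimate.

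The naive inequality $\dim \SI(Q,\alpha) \le \dim \Rep(Q,\alpha) = \sum_{a \in Q_1}\alpha(ha)\alpha(ta)$ is useless here, since it grows with the number of arrows. To control it I would invoke Weyl's theorem on polarization, valid because $\kar K = 0$. Several parallel arrows from $x$ to $y$ contribute several copies of the same $\GL(\alpha)$-module $\Hom(V(x),V(y))$, and Weyl's theorem expresses the semi-invariants of such a multiplicity space, via restitution and polarization, in terms of those involving only a controlled number of the copies. The crucial feature for this application is that polarization is built from the diagonal $\GL(\alpha)$-action and hence does not change the character $\chi_\sigma$; it therefore preserves the weight $\sigma$, and in particular $|\sigma|_\alpha$. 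Thus the weights in which $\SI(Q,\alpha)$ acquires new generators coincide with the weights in which a reduced quiver $Q'$, obtained from $Q$ by collapsing parallel arrows, acquires generators.

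For the reduced quiver I would fix a topological ordering of $Q_0$ (available since $Q$ has no oriented cycles), so that every arrow runs from a smaller to a larger vertex, and arrange that $Q'$ retains at most one arrow between each ordered pair. Then
\[
\dim \Rep(Q',\alpha) \;=\; \sum_{\substack{i<j\\ \exists\,\text{arrow}\,i\to j}}\alpha_i\alpha_j \;\le\; \sum_{i<j}\alpha_i\alpha_j \;=\; \tfrac12\big(||\alpha||_1^2 - ||\alpha||_2^2\big),
\]
and since $\dim \SI(Q',\alpha) \le \dim \Rep(Q',\alpha)$ this supplies the desired dimension bound. Applying Theorem~\ref{main} to $Q'$ and transporting the resulting generating weights back to $Q$ through polarization would then give the stated inequality.

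The main obstacle is precisely the reduction in the second step: one must pin down the exact form of Weyl's theorem that permits parallel arrows to be collapsed to a single arrow per pair while keeping $|\sigma|_\alpha$ fixed. This is delicate, because the semi-invariant ring genuinely grows as arrows are added, so the content is not that the two rings agree but that every generator is a polarization of a generator supported on $Q'$, with the character unchanged. Establishing this for the full $\SL(\alpha)$-action on the direct sum of all $\Hom$-blocks at once, rather than one block at a time, and verifying that the number of copies one is forced to retain is compatible with the single-arrow tally $\sum_{i<j}\alpha_i\alpha_j$, is where the real work lies.
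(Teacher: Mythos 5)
Your overall plan is the paper's: substitute a bound $r \le \tfrac12\left(||\alpha||_1^2-||\alpha||_2^2\right)$ for the Krull dimension into Theorem~\ref{main} (equivalently, into Corollary~\ref{poly grade bounds} followed by Lemma~\ref{gradings}), and justify that bound via Weyl's polarization theorem. But the reduction you build this on --- collapsing all parallel arrows between a pair of vertices to a \emph{single} arrow, with every semi-invariant of $Q$ a polarization of one supported on the collapsed quiver $Q'$ --- is not what Weyl's theorem provides, and it is false. Weyl's theorem (\cite[Section~7.1, Theorem~A]{KP}) says that for a $G$-module $W$ the invariants of $W^{\oplus m}$ are polarizations of invariants of $W^{\oplus N}$ with $N=\dim W$; it lets you cut the multiplicity of the block $\Hom(\K^{\alpha_i},\K^{\alpha_j})$ down to $\alpha_i\alpha_j$, not down to $1$. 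A concrete failure: for the $m$-Kronecker quiver with $\alpha=(p,p)$ the collapsed quiver is $A_2$ and $\SI(A_2,(p,p))=\K[\det]$ sits in weight $(1,-1)$; since polarization preserves the weight (as you correctly note), your reduction would force $R(p,m)$ to be generated in weight $(1,-1)$, i.e., in degree $p$, which already fails to cut out the null cone for $p\ge 3$ and $m$ large by Theorem~\ref{DM-main}.

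The paper normalizes in the opposite direction: it defines $\widetilde Q$ with exactly $\alpha_i\alpha_j$ arrows from $i$ to $j$, shows $\beta(\SI(Q,\alpha))\le\beta(\SI(\widetilde Q,\alpha))$ (extra arrows are removed by polarization, and missing arrows only decrease $\beta$), and then applies Corollary~\ref{poly grade bounds} to $\widetilde Q$ with $r=\dim\SI(\widetilde Q,\alpha)\le\dim\Rep(\widetilde Q,\alpha)$, which it records as $\sum_{i<j}\alpha_i\alpha_j=\tfrac12\left(||\alpha||_1^2-||\alpha||_2^2\right)$. (That last count deserves scrutiny --- with $\alpha_i\alpha_j$ arrows each contributing an $\alpha_j\times\alpha_i$ matrix block one gets $\sum_{i<j}(\alpha_i\alpha_j)^2$ --- but the relevant point for your write-up is that the dimension being bounded is that of the saturated quiver $\widetilde Q$, not of a one-arrow-per-pair quiver $Q'$.) So the step you defer as ``where the real work lies'' is not merely delicate; as stated it cannot be carried out, and the argument has to be rerouted through $\widetilde Q$.
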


Even though our bounds are not polynomial in $n = |Q_0|$, we give an example to show that it is not possible to obtain general bounds that are polynomial. Indeed consider the quiver $Q_n$ shown below. 

\begin{center}
 \begin{tikzpicture}

\path node (x) at (0,0) []{$1$};
\path node (y) at (1.5,-1.5) []{$2$};

\draw[->] ([xshift=0.1 cm]y.north west) -- ([xshift=0.1 cm]x.south east) ;
\draw[->] ([xshift=-0.1 cm]y.north west) -- ([xshift=-0.1 cm]x.south east) ;

\path node (z) at (3,0) {$3$};
\draw[->] ([xshift=0.1 cm]y.north east) -- ([xshift=0.1 cm]z.south west) ;
\draw[->] ([xshift=-0.1 cm]y.north east) -- ([xshift=-0.1 cm]z.south west) ;

\path node (w) at (4.5,-1.5) {};

\draw[->] ([xshift=0.1 cm]w.north west) -- ([xshift=0.1 cm]z.south east) ;
\draw[->] ([xshift=-0.1 cm]w.north west) -- ([xshift=-0.1 cm]z.south east) ;

\path node at (6,-0.75) {$\cdots\cdots$};

\path node (a) at (7.5,-1.5) {$n-1$};
\path node (b) at (9,0) {$n$};
\draw[->] ([xshift=0.1 cm]a.north) -- ([xshift=0.1 cm]b.south west) ;
\draw[->] ([xshift=-0.1 cm]a.north) -- ([xshift=-0.1 cm]b.south west) ;

\end{tikzpicture}
\end{center}

\begin{proposition} \label{expbound}
For the quiver $Q_n$, and dimension vector $\alpha = (2,3,\dots,3,1)$, the semi-invariants of weights $\sigma$ with $|\sigma|_\alpha < 2^n - 2$ do not define the null cone, and hence do not generate $\SI(Q_n, \alpha)$.
\end{proposition}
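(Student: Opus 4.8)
The plan is to produce a single representation $V^*$ that lies outside the null cone $\mathcal N$ yet on which every semi-invariant of weight $\sigma$ with $|\sigma|_\alpha < 2^n - 2$ vanishes; the existence of such a point shows that these low-weight semi-invariants cannot cut out $\mathcal N$ (which omits $V^*$), and in particular cannot generate $\SI(Q_n,\alpha)$. By Theorem~\ref{si} a semi-invariant of weight $\sigma$ is a determinant $\det(\sum_i t_i X_i)$ of the associated linear matrix, so it is non-vanishing at $V^*$ exactly when that matrix, evaluated at $V^*$, is invertible for some choice of the $t_i$. Using King's numerical criterion as a bridge, a non-vanishing semi-invariant of weight $\sigma$ at $V^*$ forces $V^*$ to be $\sigma$-semistable, i.e.\ $\sigma\cdot\alpha=0$ and $\sigma(\underline{\dim}\,W)\le 0$ for every subrepresentation $W\subseteq V^*$. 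Thus it suffices to build $V^*$ so that (i) $V^*$ is $\sigma^*$-semistable for at least one weight $\sigma^*$, giving a nonempty semistable cone and hence $V^*\notin\mathcal N$, and (ii) every weight $\sigma$ for which $V^*$ is $\sigma$-semistable satisfies $|\sigma|_\alpha \ge 2^n-2$.

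For the orientation at hand the even vertices are sources and the odd vertices are sinks, so any admissible weight has $\sigma\le 0$ on odd vertices and $\sigma\ge 0$ on even vertices, whence $|\sigma|_\alpha = 3\sum_{i\ \mathrm{even}}\sigma(i)$, the interior even vertices having dimension $3$. Condition (ii) therefore amounts to a lower bound $\sum_{i\ \mathrm{even}}\sigma(i)\ge (2^n-2)/3$, and the whole point is to arrange a chain of subrepresentations whose numerical inequalities force this sum to grow geometrically. To construct $V^*$ I would define the arrow maps by a downward recursion from the dimension-one vertex $n$. The key local feature is that the two arrows from a source of dimension $3$ into the sink of dimension $1$ span only a two-dimensional space of covectors, hence annihilate a common line; propagating such rank and kernel constraints to the left — while keeping all maps of full generic rank wherever possible, so that the quiver does \emph{not} split into disconnected pieces (a full zero map would localize a cheap Kronecker semi-invariant of weight $6$ and destroy the bound) — should produce a nested family of subrepresentations $W_1\subset W_2\subset\cdots$ anchored at vertex $n$, with $\underline{\dim}\,W_k$ recording the subspace ``already used up'' at the $k$-th sink from the right.

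Once the maps are fixed, I would verify (i) by exhibiting the extremal weight explicitly. For $n=3$ it is $\sigma^*=(-3,2,0)$, coming from the test dimension vector $\beta=(1,2,4)$ with $\langle\beta,\alpha\rangle=0$ and $|\sigma^*|_\alpha=6=2^3-2$; in general I expect $\sigma^*$ to be determined by the analogous $\beta$ solving $\langle\beta,\alpha\rangle=0$, and King's inequalities for $\sigma^*$ are then checked directly against the chain $\{W_k\}$.

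The main obstacle is (ii): proving that no cheaper weight survives. Concretely, I would show that the inequalities $\sigma(\underline{\dim}\,W_k)\le 0$, combined with $\sigma\cdot\alpha=0$ and the sign pattern above, yield a recursion of the shape $c_k \ge 2\,c_{k+1}$ for the forced weight contributions $c_k$, anchored by the dimension-one vertex at the right end, whose solution sums to $\sum_{i\ \mathrm{even}}\sigma(i)\ge(2^n-2)/3$; the factor $2$ per step reflects that each dimension-three sink must absorb its two incoming two-dimensional map-spaces. Equivalently, on the determinantal side, one argues that the engineered common-kernel lines force the linear matrix of any weight $\sigma$ with $|\sigma|_\alpha<2^n-2$ to contain a block with a nontrivial common kernel, hence to be singular for all values of the $t_i$. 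Making this recursion exact — pinning down the rank data of $V^*$ so that the chain $\{W_k\}$ both exists and is extremal, while simultaneously certifying that the quiver stays connected so that no low-weight semi-invariant can localize on a proper sub-quiver — is the crux, and is precisely where the doubling (and hence the $2^n$) has to be extracted.
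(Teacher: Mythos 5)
Your overall strategy is the right one --- and the same as the paper's: exhibit a single representation outside the null cone on which every semi-invariant of weight $\sigma$ with $|\sigma|_\alpha < 2^n-2$ vanishes, by showing that its cone of semistable weights contains only weights with $|\sigma|_\alpha \geq 2^n-2$. But the proof has a genuine gap at exactly the point you flag as the crux: the representation $V^*$ is never constructed, the chain $W_1\subset W_2\subset\cdots$ is never defined, and the doubling recursion $c_k\geq 2c_{k+1}$ is asserted rather than derived from any concrete set of King inequalities. As written, nothing certifies that a representation with the required rank/kernel pattern exists, that it is semistable for \emph{some} weight (so that it is actually outside the null cone), or that its semistable cone excludes all cheaper weights. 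Your $n=3$ data point $\sigma^*=(-3,2,0)$ is also suspect: a weight vanishing at a vertex imposes no condition there, and it is not clear your (unspecified) $V^*$ would be semistable for it; in the paper's example the unique admissible ray for $n=3$ is spanned by $(-1,2,-4)$, not $(-3,2,0)$.

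The paper closes this gap with a construction that is simpler than the one you sketch, and your instinct that the representation must be kept ``connected'' to avoid localizing a cheap Kronecker semi-invariant is exactly backwards. The paper takes the \emph{decomposable} representation $R=\bigoplus_i \psi_i(\cdot)$, where each summand is one of the two standard indecomposable $2$-Kronecker representations of dimension vectors $(1,2)$ and $(2,1)$ placed on the edge $\{i,i+1\}$. By King's criterion, a direct sum is $\sigma$-semistable only if $\sigma$ kills the dimension vector of every summand (each summand is a subrepresentation and the values sum to zero), so $\operatorname{C}(R)$ is forced into the kernel of the $(n-1)\times n$ bidiagonal matrix with rows $(\dots,2,1,\dots)$ --- a single ray spanned by the indivisible weight $(-1,2,-4,8,\dots)$, whose $|\sigma|_\alpha$ is computed directly to be $2^n-2$. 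Decomposability is what makes the weight cone one-dimensional and the geometric growth immediate; a weight supported on a single Kronecker edge fails $\sigma(\underline{\dim}\,\psi_j)=0$ for the other summands, so no ``cheap'' semi-invariant survives on $R$. If you want to salvage your single-representation approach you would need to actually produce the maps and the subrepresentation chain and verify the recursion, which is substantially more work than the paper's two-line linear algebra.
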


\subsection{Organization}
In Section~\ref{LA}, we describe a result in linear algebra that will be very useful to get bounds. We recall relevant results from Computational Invariant Theory in Section~\ref{CIT} and some recent results from \cite{DM} in Section~\ref{previous results}. In Section~\ref{SC}, we recall King's criterion for semi-stability and stability. We then use the King's criterion and the result from Section~\ref{LA} to get bounds for the null cone in Section~\ref{BNC}. We use the bounds on the null cone to generate bounds for the degree of generators in Section~\ref{BGS}, and in Section~\ref{RD}, we remove the dependence on $\dim(\SI(Q,\alpha))$. Finally in Section~\ref{lowerbound}, we prove Proposition~\ref{expbound}.

\section{A result from linear algebra} \label{LA}
For any vector $w = (w_1,w_2,\dots,w_n) \in \Q^n$, recall that  
$$
||w||_1 = |w_1| + |w_2| + \dots + |w_n|,
$$ and
$$
||w||_2 = (w_1^2 + w_2^2 + \dots + w_n^2)^{1/2}.
$$

We have the inequalities 
$$
||w||_2 \leq ||w||_1 \leq \sqrt{n} ||w||_2.
$$

Let  $\vec{v}_1,\vec{v}_2,\dots,\vec{v}_{n-1} \in \Z_{\geq 0}^n$ be linearly independent over $\Q$, with $\vec{v}_1 + \vec{v}_2 + \dots + \vec{v}_{n-1} = \vec{v} \in \Z_{\geq 0}^n$. Considering each $\vec{v}_i$ as a row vector, we can write a $(n-1) \times n$ matrix $M$ whose $i^{th}$ row is $\vec{v}_i$. Since the $\vec{v}_i$ are linearly independent over $\Q$, the rank of this matrix is $n-1$. Hence it has a $1$-dimensional kernel. The following proposition bounds the smallest nonzero integral vector on this $1$-dimensional kernel:

\begin{proposition}\label{linear-algebra}
Let $\vec{v}_1,\vec{v}_2,\dots,\vec{v}_{n-1}, \vec{v}$ and $M$ be as above. Then there is a nonzero integral vector $\vec{u} = (u_1,u_2,\dots,u_n) \in \operatorname{Ker}(M)$ such that $|u_i| \leq \displaystyle \left(\frac{||\vec{v}||_1}{n-1}\right)^{n-1}$
\end{proposition}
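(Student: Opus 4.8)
The plan is to write down an explicit kernel vector and then bound its entries by combining Hadamard's inequality with the arithmetic–geometric mean inequality. Since $M$ is an $(n-1)\times n$ matrix of rank $n-1$, its kernel is spanned by the vector of signed maximal minors: for $1\leq i\leq n$, let $M^{(i)}$ denote the $(n-1)\times(n-1)$ matrix obtained from $M$ by deleting the $i$-th column, and set $u_i=(-1)^i\det(M^{(i)})$. The standard Laplace-expansion argument shows $M\vec{u}=0$: for each row $\vec{v}_j$ of $M$, the dot product $\sum_i(\vec{v}_j)_i u_i$ equals, up to sign, the determinant of the $n\times n$ matrix obtained by stacking $\vec{v}_j$ on top of $M$, and this matrix has a repeated row and hence vanishes. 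Because the entries of $M$ are integers, each $u_i$ is an integer, so $\vec{u}\in\Z^n$; and because $M$ has rank $n-1$, at least one maximal minor is nonzero, so $\vec{u}\neq 0$. Thus $\vec{u}$ is a nonzero integral kernel vector, and it remains only to bound its entries.

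For the bound, I would estimate each $|u_i|=|\det(M^{(i)})|$ directly. Applying Hadamard's inequality to $M^{(i)}$, whose $j$-th row is $\vec{v}_j$ with its $i$-th coordinate deleted, gives $|u_i|\leq\prod_{j=1}^{n-1}||(\vec{v}_j)_{\hat{\imath}}||_2$, where $(\vec{v}_j)_{\hat{\imath}}$ is the truncated row. Deleting a coordinate only decreases the Euclidean norm, and for any vector the Euclidean norm is at most the $\ell^1$ norm, so $||(\vec{v}_j)_{\hat{\imath}}||_2\leq||\vec{v}_j||_2\leq||\vec{v}_j||_1$. Hence $|u_i|\leq\prod_{j=1}^{n-1}||\vec{v}_j||_1$, and by AM–GM this product is at most $\left(\tfrac{1}{n-1}\sum_{j=1}^{n-1}||\vec{v}_j||_1\right)^{n-1}$.

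The final step is where the two standing hypotheses enter. Because each $\vec{v}_j$ lies in $\N^n$, its $\ell^1$ norm is simply the sum of its entries; and because $\vec{v}_1+\cdots+\vec{v}_{n-1}=\vec{v}$ with all entries nonnegative, summing these entry-sums gives $\sum_{j=1}^{n-1}||\vec{v}_j||_1=||\vec{v}||_1$. Substituting yields $|u_i|\leq\left(\tfrac{||\vec{v}||_1}{n-1}\right)^{n-1}$ for every $i$, which is exactly the claimed bound.

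I do not expect a serious obstacle here: the estimate is a clean concatenation of Hadamard, the inequality $||\cdot||_2\leq||\cdot||_1$, and AM–GM. The only points requiring care are recognizing that the signed-minor vector (rather than a Cramer's-rule or lattice-reduction type short vector) is the natural object to work with, and noting that nonnegativity of the $\vec{v}_j$ is precisely what collapses $\sum_j||\vec{v}_j||_1$ to $||\vec{v}||_1$; without that hypothesis one would only obtain a bound in terms of $\sum_j||\vec{v}_j||_1$, which can be substantially larger than $||\vec{v}||_1$.
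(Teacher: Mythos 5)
Your proof is correct and follows essentially the same route as the paper: take $u_i=(-1)^i\det(M^{(i)})$, apply Hadamard's inequality, pass from the $\ell^2$ to the $\ell^1$ norm, use AM--GM, and invoke nonnegativity to identify $\sum_j||\vec{v}_j||_1$ with $||\vec{v}||_1$. Your treatment is in fact slightly more careful than the paper's, which applies Hadamard directly with the full rows $\vec{v}_j$ rather than noting that deleting a coordinate only decreases the norm.
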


\begin{proof}

Let $\widehat M(i)$ denote the $(n-1) \times (n-1)$ minor of $M$ obtained by removing the $i^{th}$ column. Then define $u_i = (-1)^i \widehat M(i)$. It is clear that $\vec{u} = (u_1,u_2,\dots,u_n)$ is an integral vector and that it is in the kernel of $M$. Further, we have 
\begin{align*} |u_i| & \leq ||\vec{v}_1||_2\cdot ||\vec{v}_2||_2 \cdots ||\vec{v}_{n-1}||_2 \\
  & \leq  ||\vec{v}_1||_1\cdot ||\vec{v}_2||_1 \cdots ||\vec{v}_{n-1}||_1 \\
  & \leq \left(\frac{ ||\vec{v}_1||_1 +  ||\vec{v}_2||_1 +  \cdots + ||\vec{v}_{n-1}||_1}{n-1}\right)^{n-1}\\
  &= \left(\frac{||\vec{v}||_1}{n-1}\right)^{n-1} \\
\end{align*}

\end{proof}

\section{Computational Invariant Theory} \label{CIT}
Let $V$ be a rational representation of a linearly reductive group $G$. Then $\K[V]$ is a polynomial ring, and has a natural grading by total degree. The ring of invariants $\K[V]^G$ inherits this grading. Further, we know that it is finitely generated since $G$ is reductive (see~\cite{Hilbert1, Hilbert2, Nagata, Haboush}). 

\begin{definition}
We define $\beta(\K[V]^G)$ to be the smallest integer $D$ such that invariants of degree $\leq D$ generate $\K[V]^G$.
\end{definition}

A set of homogeneous invariants $f_1,f_2,\dots,f_r$ is called a homogeneous system of parameters (hsop) if they are algebraically independent, and $\K[V]^G$ is a finite module over $\K[f_1,f_2,\dots,f_r]$. In particular, the number of invariants in any hsop must be equal to the Krull dimension of $\K[V]^G$. The invariant ring $\K[V]^G$ is Cohen-Macaulay by the Hochster-Roberts theorem, which implies that $\K[V]^G$ is in fact a free module over any hsop. 

\begin{definition}{\textbf{Null Cone :}}
The null cone $\mathcal{N}$ is the zero set of all homogeneous invariant polynomials of positive degree
$$\mathcal{N} = \{ v \in V | f(v) = 0 \  for\  all \ f \in \K[V]^G_+ \}.$$ 
\end{definition}

A set of algebraically independent homogeneous invariants $f_1,f_2,\dots,f_r$ form a hsop if and only if their zero set coincides with the null cone (see \cite[Lemma~2.4.5]{DK}). Kempf showed that the Hilbert series of $\K[V]^G$ is a rational function of degree $\leq 0$ (see \cite{Kempf}). From this, one can deduce that if $\deg(f_i) = d_i$, then $\beta(\K[V]^G) \leq d_1 + d_2 + \dots + d_r$. In \cite{Knop1,Knop2} Knop further proved that the degree of the Hilbert series is in fact $\leq -r$ if $G$ is a semisimple connected group in characteristic $0$ (see also \cite[Theorem~2.6.2]{DK}).

\begin{definition}
We define $\gamma(\K[V]^G)$ to be the smallest integer $D$ such that the non-constant homogeneous invariants of degree $\leq D$ define the null cone.
\end{definition}

Popov argued that one could use bounds for $\gamma(\K[V]^G)$ to get bounds for $\beta(\K[V]^G)$. Hence one can use Kempf and Knop's results on the Hilbert series to get bounds for $\beta(\K[V]^G)$ given bounds on $\gamma(\K[V]^G)$. The first author in \cite{Derksen1} improved these bounds to one that is polynomial in $\gamma(\K[V]^G).$

\begin{theorem} [\cite{Derksen1}] \label{beta-gamma}
For a rational representation $V$ of a linearly reductive group $G$, we have
$$\beta(\K[V]^G) \leq \max \{2,\textstyle\frac{3}{8}r (\gamma(\K[V]^G))^2\}.$$
\end{theorem}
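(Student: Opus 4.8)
The plan is to reduce Theorem~\ref{beta-gamma} to the construction of a low-degree homogeneous system of parameters and then invoke the consequences of Kempf's theorem recalled above. Write $\gamma = \gamma(\K[V]^G)$ and $r = \dim \K[V]^G$. By the definition of $\gamma$ there are non-constant homogeneous invariants $h_1,\dots,h_s$, each of degree at most $\gamma$, whose common zero locus is exactly the null cone $\mathcal{N}$; recall that $\mathcal{N}$ has codimension $r$ in $V$. Using the criterion that algebraically independent homogeneous invariants form an hsop precisely when their zero locus equals $\mathcal{N}$, together with the bound $\beta(\K[V]^G)\le \deg\theta_1+\dots+\deg\theta_r$ recalled above for every hsop $\theta_1,\dots,\theta_r$, it suffices to produce an hsop with $\sum_i \deg\theta_i \le \tfrac38 r\gamma^2$; the truncation $\max\{2,\cdot\}$ then absorbs the degenerate cases where this quantity is smaller than $2$.

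To manufacture the $\theta_i$ out of the $h_j$, I would cut $V$ down step by step. Linear combinations are only available among invariants of one fixed degree, so at each stage I would pass to suitable powers and products of the $h_j$ to reach a chosen common degree $D$ and then take a generic $\K$-linear combination of a spanning set of that degree. A Bertini/graded prime-avoidance argument over the infinite field $\K$---using that $\mathcal{N}$ has codimension $r$ and that the $h_j$ have no common zero off $\mathcal{N}$---should show that $r$ such generic combinations of degree $D$ again have common zero locus exactly $\mathcal{N}$, hence form an hsop; this would yield $\beta(\K[V]^G)\le rD$. The entire problem is thus to keep the degree $D$ (or the list of degrees, if I let them vary across the $\theta_i$) as small as possible.

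The step I expect to be the main obstacle is precisely this degree control, and it is where both the exponent $2$ and the constant $\tfrac38$ originate. For each irreducible component $Z\not\subseteq\mathcal{N}$ that appears during the process, some $h_j$ is nonzero on $Z$, so the set of degrees of invariants not vanishing on $Z$ is a numerical semigroup containing a generator of size at most $\gamma$; a power of that $h_j$ then supplies a nonvanishing element in every degree that is a multiple of this generator. The difficulty is to select a single degree (or a short family of degrees) that works simultaneously for all the finitely many components in play without resorting to the least common multiple of their generators, which would be exponential in $\gamma$. My plan for this is a Frobenius-type numerical-semigroup estimate bounding the smallest usable common degree by $O(\gamma^2)$, combined with a careful accounting that sums the degrees of the resulting $\theta_i$ to at most $\tfrac38 r\gamma^2$; the sharp constant would come from optimizing this estimate (and, in the semisimple characteristic-zero case, from Knop's sharper bound on the Hilbert series). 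Once such an hsop is in hand, the inequality of Kempf closes the argument.
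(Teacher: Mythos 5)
This statement is not proved in the paper at all: Theorem~\ref{beta-gamma} is quoted verbatim from \cite{Derksen1} and used as a black box, so the only meaningful comparison is with the proof in that reference. Your overall skeleton does match the known argument: reduce to producing a homogeneous system of parameters $\theta_1,\dots,\theta_r$ with controlled degrees, use the criterion that homogeneous invariants form an hsop exactly when their zero set is the null cone, and then invoke the Kempf/Knop bound on the Hilbert series to get $\beta(\K[V]^G)\le \sum_i\deg\theta_i$. All of those ingredients are correctly identified (and are exactly the ones the paper recalls in Section~\ref{CIT}).

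The problem is that the one step you defer --- producing each $\theta_i$ in degree $O(\gamma^2)$ --- is the entire content of the theorem, and the specific plan you sketch for it does not work as stated. At stage $i$ you need a single \emph{homogeneous} invariant, hence of a single degree $D$, that is nonzero on every irreducible component $C$ of $\mathcal{V}(\theta_1,\dots,\theta_{i-1})$ not contained in $\mathcal{N}$. A generic linear combination of the degree-$D$ invariants works only if $D$ lies in the semigroup $S_C=\{d:\exists f\in\K[V]^G_d,\ f|_C\neq 0\}$ for \emph{every} such $C$. All you know a priori is that each $S_C$ contains some element $d_C\le\gamma$, hence contains $d_C\Z_{>0}$; if the $d_C$ are pairwise coprime (and nothing you have said rules this out), the only degrees you can certify to lie in $\bigcap_C S_C$ are common multiples of the $d_C$, and $\lcm$ of a subset of $\{1,\dots,\gamma\}$ grows exponentially in $\gamma$, not quadratically. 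A ``Frobenius-type numerical-semigroup estimate'' cannot rescue this, because the Frobenius bound for a semigroup generated by a single element is infinite: $d\Z_{>0}$ omits arbitrarily large integers. So the argument, as proposed, yields at best an exponential bound, and the passage to $\tfrac38 r\gamma^2$ (including the constant $\tfrac38$, which you correctly suspect carries real information) requires a genuinely different mechanism; supplying it is precisely the contribution of \cite{Derksen1}, and it is not a routine prime-avoidance or Bertini argument. As it stands, the proposal is an accurate statement of the reduction plus an acknowledgement that the hard part is hard, which is not a proof.
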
 

Note that in characteristic $0$, linearly reductive groups are precisely the reductive groups. A more detailed treatment of the above results, as well as several other techniques for finding degree bounds can be found in \cite{DK}, see also \cite{Popov1,Popov2}.

\section{Degree bounds on matrix semi-invariants} \label{previous results}
In \cite{DM}, we studied the left-right action of $\SL_n \times \SL_n$ on $m$-tuples on $n \times n$ matrices. Let $R(n,m) = \K[\M_{n,n}^m]^{\SL_n \times \SL_n}$ be the ring of invariants. Studying linear subspaces of matrices, and the behaviour of ranks in tensor blow-ups, we proved:

\begin{theorem} [\cite{DM}] \label{DM-main}
We have the following bounds for invariants defining the null cone:
$$
 n\lfloor\sqrt{n^2-1}\rfloor \leq \gamma(R(n,m)) \leq n(n-1).
$$ 
\end{theorem}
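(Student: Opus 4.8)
The plan is to recast the theorem as a question about ranks of tensor blow-ups. Applying Theorem~\ref{si} to the $m$-Kronecker quiver with dimension vector $(n,n)$ and weight $\sigma=(d,-d)$, the degree-$nd$ component of $R(n,m)$ is spanned by the blow-up determinants $\det\bigl(\sum_{i=1}^m X_i\otimes T_i\bigr)$ with $T_i\in\M_{d,d}$, and invariants occur only in degrees divisible by $n$. Writing $c(d)=\max_{T_i\in\M_{d,d}}\rank\bigl(\sum_i X_i\otimes T_i\bigr)$, the null cone is exactly the locus where $c(d)<nd$ for every $d$, i.e. where $\ncrank(X_1,\dots,X_m)<n$. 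Consequently, if for a tuple $v$ of non-commutative rank $n$ we let $\delta(v)$ be the least $d$ with $c(d)=nd$ and set $d^{*}=\max_v\delta(v)$ over all such $v$, then $\gamma(R(n,m))=n\,d^{*}$. The theorem thus becomes the assertion $\lfloor\sqrt{n^2-1}\rfloor\le d^{*}\le n-1$; since $\lfloor\sqrt{n^2-1}\rfloor=n-1$ for every $n\ge 1$, the two bounds actually coincide, and the real goal is the exact value $d^{*}=n-1$. I would prove the two inequalities separately.

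For the upper bound $d^{*}\le n-1$ I would show that full non-commutative rank is always certified at blow-up size $n-1$: if $\ncrank=n$ then $c(n-1)=n(n-1)$, so a single size-$(n-1)$ blow-up determinant is a nonzero invariant of degree $n(n-1)$ separating $v$ from $\mathcal N$. The tool is the description of non-commutative rank via shrunk subspaces (in the spirit of Fortin--Reutenauer): $\ncrank(\langle X_1,\dots,X_m\rangle)<n$ precisely when there is a subspace $U\subseteq K^n$ with $\dim\bigl(\sum_i X_iU\bigr)<\dim U$. Arguing contrapositively, if the size-$(n-1)$ blow-up is still rank-deficient, I would convert that deficiency, via a Wong-sequence and rank-amplification analysis of how a defect propagates under $\otimes$, into a genuine shrunk subspace for the original space, forcing $\ncrank<n$. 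Pinning the threshold at exactly $n-1$, rather than a weaker bound polynomial in $n$, is the linear-algebraic core drawn from \cite{DM}.

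For the lower bound I would exhibit, for a suitable $m$, an explicit tuple $(X_1,\dots,X_m)$ with $\ncrank=n$ whose blow-ups of size $\le n-2$ are all singular, so that $\delta(v)=n-1$ and no invariant of degree $<n(n-1)$ separates $v$ from $\mathcal N$. A natural source is a space of matrices of full non-commutative but deficient commutative rank---for instance one built from the regular representation of a degree-$n$ division algebra, or a suitably generic skew space---for which $c(d)$ can be computed directly. The blow-up size at which $c(d)$ first reaches $nd$ is what the expression $\lfloor\sqrt{n^2-1}\rfloor$ records.

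The main obstacle is the regularity claim underlying the upper bound: tracking precisely how a rank defect of $\sum_i X_i\otimes T_i$ forces a shrunk subspace of $\langle X_1,\dots,X_m\rangle$, with the sharp blow-up bound $n-1$. Everything hinges on controlling the growth of the defect under tensoring, and it is exactly here that the rank-in-blow-up machinery of \cite{DM} is indispensable. The lower-bound example is more concrete but still demands a careful blow-up rank computation to confirm that size $n-2$ genuinely fails to certify full rank.
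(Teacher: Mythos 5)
First, note that this paper contains no proof of Theorem~\ref{DM-main}: it is quoted verbatim from \cite{DM}, so your proposal can only be measured against the argument given there. Your reduction is the right one and is exactly the framework of \cite{DM}: by Theorem~\ref{si} applied to the $m$-Kronecker quiver, the degree-$nd$ piece of $R(n,m)$ is spanned by the blow-up determinants $\det\bigl(\sum_i X_i\otimes T_i\bigr)$ with $T_i\in\M_{d,d}$, invariants occur only in degrees divisible by $n$, membership in the null cone is equivalent to $\ncrank<n$ via shrunk subspaces, and $\gamma(R(n,m))=n\,d^{*}$ with $d^{*}$ the worst-case certification size. Your observation that $\lfloor\sqrt{n^2-1}\rfloor=n-1$, so that the statement as written pins down the exact value, is also correct arithmetic.

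However, both halves of the actual proof are missing. For the upper bound, the entire content of the theorem is the implication that $\ncrank(\mathcal X)=n$ forces $\operatorname{rank}(\mathcal X^{[n-1]})=n(n-1)$, and your proposal defers precisely this to an unspecified ``Wong-sequence and rank-amplification analysis.'' In \cite{DM} it rests on two concrete lemmas you would need to state and prove: the regularity lemma (over a sufficiently large field, $d$ divides the maximal rank in the blow-up $\mathcal X^{[d]}$) and an increment lemma (if $\operatorname{rank}(\mathcal X^{[d]})/d<\ncrank(\mathcal X)$ then $\operatorname{rank}(\mathcal X^{[d+1]})/(d+1)\ge \operatorname{rank}(\mathcal X^{[d]})/d+1$), combined with the observation that the commutative rank is at least $2$ whenever $\ncrank=n\ge 2$; the chain $2,3,\dots,n$ then terminates at $d=n-1$. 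Without these the upper bound is a restatement of the theorem, not a proof. The lower bound is in worse shape: you need, for each $n$, an explicit tuple with $\ncrank=n$ but $\operatorname{rank}(\mathcal X^{[d]})<nd$ for every $d\le n-2$, and neither of your proposed sources delivers this. A division algebra of degree $n>1$ does not exist over an algebraically closed field, and where it does exist its regular representation consists of invertible matrices, so the commutative rank is already full and $d=1$ certifies non-membership in $\mathcal N$ --- the opposite of what is needed; likewise a ``suitably generic'' space has full commutative rank once its dimension is moderate, so again $d=1$ suffices. Producing a tuple whose normalized blow-up ranks grow as slowly as possible, and verifying the deficiency at every $d\le n-2$, is the genuinely hard part of the lower bound, and the proposal contains no workable candidate.
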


In characteristic 0, we show the existence of a hsop in degree $n(n-1)$, and using the bounds on Hilbert series given by Knop, we get bounds for $\beta(R(n,m))$.

\begin{theorem} [\cite{DM}]
Let $\kar K = 0$. We have:
\begin{enumerate}
\item $\beta(R(n,m)) \leq mn^4$;
\item For all $m$, $\beta(R(n,m)) \leq n^6$.
\end{enumerate}
\end{theorem}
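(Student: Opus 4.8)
The plan is to combine the sharp estimate $\gamma(R(n,m)) \le n(n-1)$ from Theorem~\ref{DM-main} with the Hilbert series machinery recalled in Section~\ref{CIT}, and then to remove the dependence on $m$ by a classical polarization argument. Throughout, $G = \SL_n \times \SL_n$ is a connected semisimple group, hence linearly reductive in characteristic $0$, so every tool of Section~\ref{CIT} applies to $V = \M_{n,n}^m$.

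First I would produce a homogeneous system of parameters of controlled degree. By Theorem~\ref{DM-main} the non-constant homogeneous semi-invariants of degree $\le n(n-1)$ already cut out the null cone $\mathcal{N}$. Writing $r = \dim R(n,m)$ for the Krull dimension, the variety $\mathcal N$ has codimension $r$ in $V$; since $\K$ is infinite, a standard graded Noether normalization (prime avoidance) argument extracts from these semi-invariants algebraically independent $f_1,\dots,f_r$ with $\deg f_i \le n(n-1)$, and by \cite[Lemma~2.4.5]{DK} any such family with zero set $\mathcal N$ is an hsop. If one insists on the single degree $n(n-1)$ advertised above, each $f_i$ of smaller weight can be multiplied by a generic semi-invariant to raise it to weight $n-1$; this refinement is unnecessary for the bound itself. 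Now, since $R(n,m)$ is Cohen--Macaulay by Hochster--Roberts, it is a free module over $\K[f_1,\dots,f_r]$, and Kempf's theorem that the Hilbert series has degree $\le 0$ (sharpened to $\le -r$ by Knop, as $G$ is connected semisimple in characteristic $0$; see \cite{Kempf,Knop1,Knop2}) gives $\beta(R(n,m)) \le \sum_i \deg f_i \le r\,n(n-1)$. As $r \le \dim V = mn^2$, I conclude
$$
\beta(R(n,m)) \le r\,n(n-1) \le mn^2\cdot n(n-1) \le mn^4,
$$
which is part (1). (Feeding $\gamma \le n(n-1)$ directly into Theorem~\ref{beta-gamma} would instead give the weaker $\tfrac38 r\,(n(n-1))^2$, so the hsop route is what yields the stated constant.)

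For the uniform bound (2) I would invoke a classical theorem of Weyl on vector invariants (see \cite{DK}). Since $\M_{n,n}$ has dimension $n^2$, for $m \ge n^2$ every generator of $R(n,m) = \K[\M_{n,n}^m]^{G}$ is obtained by polarization from a generator of $R(n,n^2)$, and polarization preserves total degree; for $m \le n^2$ generators of $R(n,m)$ are restitutions of those of $R(n,n^2)$. In either case $\beta(R(n,m)) \le \beta(R(n,n^2))$ for all $m$. Applying part~(1) with $m = n^2$ then gives $\beta(R(n,m)) \le (n^2)\,n^4 = n^6$, establishing (2).

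Granting the genuinely hard input $\gamma(R(n,m)) \le n(n-1)$, the two delicate points are internal to this plan. The first is the extraction of an hsop whose degrees are bounded by $\gamma$ — and, if one wants them all equal to $n(n-1)$, the need for enough semi-invariants in that single degree to carry out prime avoidance, where the multiple-of-$n$ grading of the semi-invariants and genericity enter. The second is confirming the hypotheses for Knop's refinement (connectedness and semisimplicity of $\SL_n \times \SL_n$); should this be in doubt, Kempf's unrefined bound $\beta \le \sum_i \deg f_i$ already suffices for both (1) and (2). I expect the polarization reduction to $m = n^2$ to be the conceptual crux, since it is exactly what converts the $m$-dependent estimate of part~(1) into the uniform bound of part~(2).
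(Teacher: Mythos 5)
Your overall architecture — an hsop of degree tied to $n(n-1)$, then the Kempf/Knop Hilbert series bound $\beta \leq \sum_i \deg f_i$ to get $\beta(R(n,m)) \leq r\,n(n-1) \leq mn^4$, then Weyl's polarization theorem with $m = n^2$ to get the uniform bound $n^6$ — is exactly the route taken in \cite{DM} and sketched in this paper. But there is a genuine gap at the pivotal step: the claim that ``a standard graded Noether normalization (prime avoidance) argument'' extracts an hsop with $\deg f_i \leq n(n-1)$ from the assorted invariants of degree $\leq n(n-1)$ that cut out the null cone. This is not a standard fact, and it fails in general. Prime avoidance over an infinite field works by taking generic linear combinations, which are homogeneous only within a single degree; different irreducible components of $V(f_1,\dots,f_{i-1})$ may be detected only by invariants of \emph{different} degrees, and the usual repair (raising to powers to reach a common degree) inflates the degrees up to $\lcm(1,\dots,\gamma)$, which is exponential. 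Indeed, if your claim were valid for arbitrary representations, one would have $\beta \leq r\gamma$ in general, superseding Theorem~\ref{beta-gamma}; the very fact that the best known general bound is $\frac{3}{8}r\gamma^2$ (and that this paper's Theorem~\ref{main} for quivers is quadratic in the null cone bound rather than linear) is internal evidence that the hsop extraction you invoke is not automatic.

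What rescues the argument for $R(n,m)$ — and what \cite{DM} actually proves — is the stronger \emph{single-degree} statement: the null cone is cut out by invariants of degree exactly $n(n-1)$. Here every invariant has degree divisible by $n$ (degree $dn$ invariants are determinants of tensor blow-ups of size $d$), and the blow-up rank regularity results of \cite{DM} show that a tuple outside the null cone admits a nonsingular blow-up at size exactly $n-1$, not merely at some $d \leq n-1$. With all the cutting invariants in one degree, generic linear combinations are homogeneous and prime avoidance does produce an hsop in degree $n(n-1)$, after which your Kempf/Knop computation and the bound $r \leq mn^2$ go through verbatim. Your parenthetical aside — that elements ``of smaller weight can be multiplied by a generic semi-invariant'' and that ``this refinement is unnecessary for the bound itself'' — has the logic backwards: the single-degree statement is the load-bearing step, not an optional polish, and the multiplication patch itself requires a nonvanishing invariant of the complementary degree on each component, which is precisely what the regularity theorem supplies. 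Your part (2), reducing to $m = n^2$ via Weyl's theorem on polarization (with restitution/specialization handling $m < n^2$), is correct and is exactly the paper's argument.
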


The second result follows from the first using Weyl's theorem on polarization of invariants (see \cite[Section~7.1, Theorem~A]{KP}).

Given a character $\sigma:Q_0 \to \Z$, we can consider the subring of semi-invariants $\SI(Q,\alpha,\sigma) = \bigoplus_{d \in \Z_{\geq 0}}\SI(Q,\alpha)_{d\sigma}.$ The projective variety $\Proj(\SI(Q,\alpha,\sigma))$, if non-empty, is a moduli space for $\alpha$-dimensional representations of $Q$ (see \cite{King}). 
A representation $V\in \Rep(Q,\alpha)$ is called $\sigma$-semistable if there exists an semi-invariant $f\in \SI(Q,\alpha)_{d\sigma}$ with $f(V)\neq 0$ (see~\cite{King}). The results on matrix semi-invariants mentioned above readily generalize to the subrings $\SI(Q,\alpha,\sigma)$. 

\begin{proposition} [\cite{DM}] \label{existssemi}
Let $Q = (Q_0,Q_1)$ be a quiver with no oriented cycles. Let $\sigma \in \Z^{Q_0}$ be a weight. Then 
\begin{enumerate}
\item If $V$ is $\sigma$-semistable, and $d\geq |\sigma|_\alpha -1$, then there exists a semi-invariant $f\in \SI(Q,\alpha)_{d\sigma}$ with $f(V)\neq 0$;
\item If $\kar K = 0$, then the ring $\SI(Q,\alpha,\sigma)$ is generated in degree $\leq |\sigma|_\alpha^5$.
\end{enumerate}
\end{proposition}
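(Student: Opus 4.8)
The plan is to transfer both statements to the setting of matrix semi-invariants from \cite{DM} by means of the determinantal description in Theorem~\ref{si}. Write $N = |\sigma|_\alpha$ and let $m$ be the number of indeterminates occurring in the linear matrix $A$ attached to the weight $\sigma$. First I would package the construction preceding Theorem~\ref{si} as a morphism of varieties
$$\Psi\colon \Rep(Q,\alpha)\to \M_{N,N}^m,\qquad V\mapsto (X_1(V),\dots,X_m(V)),$$
where $A_V = t_1X_1(V)+\cdots+t_mX_m(V)$ is the $N\times N$ linear matrix of paths evaluated at $V$. A direct computation with the $\GL(\alpha)$-action shows $A_{g\cdot V}=P(g)A_V Q(g)^{-1}$, where $P(g)=\bigoplus_y g(y)^{\oplus\sigma_-(y)}$ and $Q(g)=\bigoplus_x g(x)^{\oplus\sigma_+(x)}$ lie in $\GL_N$; since $\det P(g)=\det Q(g)=1$ for $g\in \Sl(\alpha)$, the map $\Psi$ is equivariant with respect to a homomorphism $\Sl(\alpha)\to \Sl_N\times \Sl_N$ into the left-right group. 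Consequently $\Psi^*$ carries $R(N,m)=\K[\M_{N,N}^m]^{\Sl_N\times\Sl_N}$ into $\SI(Q,\alpha,\sigma)$. Moreover the weight-$d$ part of $R(N,m)$ is spanned by blow-up determinants $\det(\sum_i C_i\otimes Y_i)$ with $C_i\in \M_{d,d}$ (the Kronecker-quiver case of Theorem~\ref{si}), and these pull back under $\Psi^*$ to $\det(\sum_i C_i\otimes X_i(V))$; by Theorem~\ref{si} applied to the weight $d\sigma$, whose linear matrix is exactly the $d$-fold blow-up of $A_V$, these span $\SI(Q,\alpha)_{d\sigma}$. Hence $\Psi^*\colon R(N,m)\twoheadrightarrow \SI(Q,\alpha,\sigma)$ is a surjection of graded rings sending weight $d$ to weight $d\sigma$, where $R(N,m)$ carries the weight grading and a semi-invariant of weight $d$ has total degree $Nd$.

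For part (1), the surjectivity of $\Psi^*$ gives a clean dictionary: $V$ is $\sigma$-semistable if and only if some positive-weight element of $R(N,m)$ is nonzero at $\Psi(V)$, i.e. if and only if $\Psi(V)$ lies outside the null cone of the left-right action. By the analysis of \cite{DM}, this null cone is exactly the locus where the tuple fails to have full non-commutative rank, so $\sigma$-semistability of $V$ is equivalent to $\ncrk(X_1(V),\dots,X_m(V))=N$. Theorem~\ref{DM-main} bounds the degree needed to detect this: invariants of degree $\leq N(N-1)$, i.e. of weight $\leq N-1$, define the null cone, and the blow-up analysis underlying it shows that once the non-commutative rank is full it is certified by a nonsingular $d$-fold blow-up $\sum_i C_i\otimes X_i(V)$ for \emph{every} $d\geq N-1$. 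Pulling such a determinant back through $\Psi^*$ yields $f=\Psi^*\!\bigl(\det(\sum_i C_i\otimes Y_i)\bigr)\in \SI(Q,\alpha)_{d\sigma}$ with $f(V)\neq 0$, which is exactly the assertion of (1).

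For part (2), I would simply propagate the generation bound across the surjection $\Psi^*$. A graded surjection cannot raise the top degree of generators, so $\beta(\SI(Q,\alpha,\sigma))\leq \beta(R(N,m))$ when both are measured in the weight grading. In characteristic $0$, \cite{DM} gives $\beta(R(N,m))\leq N^6$ in the total-degree grading, uniformly in $m$, via Weyl polarization and Knop's Hilbert-series bound. Since weight $d$ corresponds to total degree $Nd$, the total-degree bound $N^6$ translates to the weight bound $N^6/N=N^5$. Therefore $\SI(Q,\alpha,\sigma)$ is generated by elements of weight $d\sigma$ with $d\leq N^5=|\sigma|_\alpha^5$, proving (2).

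The routine points here are the bookkeeping in the equivariance computation and the span statement extracted from Theorem~\ref{si}; the genuine content is imported from \cite{DM}. Accordingly, I expect the main obstacle to be the regularity statement invoked in part (1): upgrading ``the null cone is cut out in weight $\leq N-1$'' (which only asserts \emph{some} nonvanishing invariant of bounded weight) to ``a nonsingular blow-up exists at every level $d\geq N-1$'' (a statement about a \emph{prescribed} weight). This needs the blow-up and shrunk-subspace machinery of \cite{DM}---in particular that the set of blow-up sizes realizing full rank is closed under addition and already contains $N-1$---rather than Theorem~\ref{DM-main} as a black box, so some care is required to cite the correct intermediate result.
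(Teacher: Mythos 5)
Correct, and essentially the same route as the paper, which offers no proof of its own: Proposition~\ref{existssemi} is quoted from \cite{DM}, where it is established by precisely your reduction --- pulling matrix semi-invariants of $R(N,m)$, $N=|\sigma|_\alpha$, back along the linear-matrix construction of Theorem~\ref{si} (so that weight $d$ corresponds to weight $d\sigma$, blow-ups of the linear matrix span $\SI(Q,\alpha)_{d\sigma}$, and $\sigma$-semistability becomes fullness of the non-commutative rank) and then importing the blow-up bounds together with $\beta(R(N,m))\leq N^6$ uniformly in $m$. One caveat on the very point you flagged: closure under addition plus membership of $N-1$ (even with $N$) yields only the numerical semigroup these generate, not every $d\geq N-1$, so the intermediate result you must cite from \cite{DM} is their regularity statement for blow-up ranks --- that full rank at blow-up size $d$ persists at size $d+1$ --- combined with the existence of a nonsingular $(N-1)$-fold blow-up when the non-commutative rank is full.
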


Even though we know the bounds for these subrings, they do not immediately give bounds on entire ring of semi-invariants, since there is no universal bound for $|\sigma|_\alpha$. Such a bound does not exist even if we restrict to indivisible dimension vectors. In the next two sections, we rectify this by showing that it suffices to consider only a finite subset of the dimension vectors.

\section{Stability conditions and the null cone} \label{SC}
Fix a quiver $Q = (Q_0,Q_1)$ with no oriented cycles and let $|Q_0| = n$. There is a criterion for deciding $\sigma$-semistability of a representation in terms of the dimension vectors of subrepresentations due to King (see \cite{King}). We use the conventions in \cite{DW2}. Given a weight $\sigma$ and a dimension vector $\beta$, we define $\sigma(\beta) = \sum_{i \in Q_0} \sigma_i\beta_i$.

\begin{theorem} [\cite{King}] \label{stability}
Let $Q = (Q_0,Q_1)$ be a quiver with no oriented cycles, $\alpha$ be a dimension vector, and $\sigma$ be a weight. Then we have: 
\begin{enumerate}
\item A representation $V \in \Rep(Q,\alpha)$ is $\sigma$-semistable if $\sigma(\underline{\dim} V) = 0$ and $\sigma(\underline{\dim} W) \leq 0$ for all subrepresentations $W \subset V$;
\item A representation $V \in \Rep(Q,\alpha)$ is $\sigma$-stable if $\sigma(\underline{\dim} V) = 0$ and $\sigma(\underline{\dim} W) < 0$ for all proper subrepresentations $0 \neq W \subsetneq V$.
\end{enumerate}
\end{theorem}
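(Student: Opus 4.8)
The plan is to deduce King's criterion from the Hilbert--Mumford numerical criterion of Geometric Invariant Theory, applied to the action of $G = \GL(\alpha)$ on $\Rep(Q,\alpha)$ with the linearization of the trivial line bundle twisted by the character $\chi_\sigma$. First I would unwind definitions: since $\SI(Q,\alpha)_{d\sigma}$ consists exactly of the functions $f$ with $f(g\cdot V) = \chi_\sigma(g)^d f(V)$, a representation $V$ is $\sigma$-semistable (some such $f$ with $d\geq 1$ is nonzero at $V$) if and only if $V$ is a GIT-semistable point for this linearization. The Hilbert--Mumford criterion then reduces the problem to one-parameter subgroups: in the appropriate normalization, $V$ is $\sigma$-semistable if and only if $\langle \chi_\sigma,\lambda\rangle \leq 0$ for every one-parameter subgroup $\lambda:\mathbb{G}_m \to G$ such that $\lim_{t\to 0}\lambda(t)\cdot V$ exists. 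Since semistability is geometric in nature, I would extend scalars and assume $K$ algebraically closed for this machinery.

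The central step is the dictionary between admissible one-parameter subgroups and filtrations of $V$ by subrepresentations. A one-parameter subgroup $\lambda$ induces at each vertex $x$ a weight decomposition $V(x) = \bigoplus_{p\in\Z} V(x)_p$; setting $W^{(p)}(x) = \bigoplus_{q\geq p} V(x)_q$, a block-by-block computation on each arrow shows that $\lim_{t\to 0}\lambda(t)\cdot V$ exists precisely when every arrow map respects this filtration, i.e.\ each $W^{(p)} = (W^{(p)}(x))_{x\in Q_0}$ is a subrepresentation of $V$. I would then compute the numerical invariant: from $\chi_\sigma(\lambda(t)) = t^{N}$ with $N = \sum_x \sigma(x)\sum_p p\dim V(x)_p$, a summation by parts, using $\sigma(\underline{\dim}V)=0$ to annihilate the boundary terms, gives the finite sum
$$\langle\chi_\sigma,\lambda\rangle = \sum_{p}\sigma(\underline{\dim} W^{(p)}),$$
ranging over the subrepresentations appearing in the filtration (terms with $W^{(p)}\in\{0,V\}$ contributing zero).

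Both statements then follow quickly. For (1), the hypothesis $\sigma(\underline{\dim}W)\leq 0$ for all subrepresentations $W$ makes every summand $\sigma(\underline{\dim}W^{(p)})$ nonpositive, so $\langle\chi_\sigma,\lambda\rangle\leq 0$ for every admissible $\lambda$; no one-parameter subgroup destabilizes $V$, hence $V$ is $\sigma$-semistable. For (2) I would argue the same way, first isolating the $\lambda$ whose associated filtration is trivial: these have all weights equal, so $\lambda$ lies in the central torus, acts on $\Rep(Q,\alpha)$ by scaling, and satisfies $\langle\chi_\sigma,\lambda\rangle = 0$ automatically. For every other admissible $\lambda$ the filtration contains a proper nonzero subrepresentation, and the strict hypothesis $\sigma(\underline{\dim}W)<0$ forces $\langle\chi_\sigma,\lambda\rangle <0$ strictly, which is exactly the numerical criterion for $\sigma$-stability.

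The main obstacle is installing the Hilbert--Mumford criterion in the correct normalization for a linearization by a character: one must verify (King's proposition) that relative invariants of weight $\chi_\sigma^{d}$ detect instability exactly through one-parameter subgroups, and fix the sign convention consistently so that $\sigma(\underline{\dim}W)\leq 0$ on subrepresentations is precisely the condition that no $\lambda$ destabilizes. The remaining work is careful bookkeeping --- the convergence and boundary terms in the summation by parts, the base change to an algebraically closed field, and, for the stable case, the exact identification of the central one-parameter subgroups that must be set aside.
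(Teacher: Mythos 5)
The paper does not prove this theorem: it is imported verbatim from King's paper (with the sign conventions of Derksen--Weyman), so there is no in-paper argument to measure you against. Your route --- GIT semistability for the linearization of the trivial bundle by $\chi_\sigma$, the Hilbert--Mumford criterion, the dictionary between one-parameter subgroups of $\GL(\alpha)$ admitting a limit and filtrations of $V$ by subrepresentations, and the summation-by-parts identity $\langle\chi_\sigma,\lambda\rangle=\sum_p\sigma(\underline{\dim}\,W^{(p)})$ --- is precisely King's original proof, and your outline of it is correct, including the correct handling of the central one-parameter subgroup (which acts trivially on $\Rep(Q,\alpha)$ and pairs to $\sigma(\alpha)=0$) in the stable case. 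Two points deserve the care you already flag: first, the version of the Hilbert--Mumford criterion for a character-twisted linearization is not the textbook statement and must be established (this is King's Proposition 2.5, which reduces to the classical criterion for the affine cone on $\Rep(Q,\alpha)\times K$); second, since the theorem as stated here asserts only the ``if'' direction, what you actually need is the hard implication of Hilbert--Mumford (no destabilizing one-parameter subgroup implies the existence of a nonvanishing relative invariant), together with the descent of semistability from $\bar K$ to the infinite field $K$, which holds because $\SI(Q,\alpha)_{d\sigma}$ is compatible with base change. Neither is a gap, but both are the real content hiding behind ``installing the criterion in the correct normalization.''
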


The set of $\sigma$-semistable representations form an abelian subcategory of the category of finite dimensional representations of a quiver $Q$. The simple objects in the category are precisely the stable representations. If $V$ is $\sigma$-semistable and $\sigma(\underline{\dim} W) = 0$ for some non-zero proper subrepresentation $W$ of $V$, then $W$ and $V/W$ are also $\sigma$-semistable. In fact, we have a Jordan-H\"{o}lder filtration $0 = V_0 \subsetneq V_1 \subsetneq \dots \subsetneq V_m = V$. The composition factors $V_i/V_{i-1}$ are unique upto rearrangement and isomorphism. Further these composition factors are $\sigma$-stable representations. We can define $\operatorname{gr}_{\sigma}(V) = \bigoplus\limits_i V_i/V_{i-1}.$

\begin{remark}
Let $d \in \Z_{>0}$. From Theorem~\ref{stability}, it follows that a representation $V$ is $\sigma$-semistable (resp. stable) if and only if $V$ is $d\sigma$-semistable (resp. stable). Hence, in particular, we have $\operatorname{gr}_\sigma(V) = \operatorname{gr}_{d\sigma}(V)$.
\end{remark}

\begin{lemma} \label{obvious}
A representation $V \in \Rep(Q,\alpha)$ is not in the null cone if and only if there exists a nonzero weight $\sigma$ such that $V$ is $\sigma$-semistable.
\end{lemma}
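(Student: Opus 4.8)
The plan is to rephrase both sides of the equivalence as non-vanishing statements about (semi-)invariants and match them through the weight-space decomposition $\SI(Q,\alpha) = \bigoplus_\sigma \SI(Q,\alpha)_\sigma$, using crucially that $\SI(Q,\alpha) = \K[\Rep(Q,\alpha)]^{\SL(\alpha)}$ is the full ring of $\SL(\alpha)$-invariants. Recall that $V \in \mathcal{N}$ exactly when every homogeneous $\SL(\alpha)$-invariant of positive degree vanishes at $V$, and that $V$ is $\sigma$-semistable exactly when some $f \in \SI(Q,\alpha)_{d\sigma}$ with $d \geq 1$ satisfies $f(V) \neq 0$. The observation I would rely on throughout is that the torus $\GL(\alpha)/\SL(\alpha) \cong (K^\star)^{Q_0}$ acts on $\SI(Q,\alpha)$ preserving the total-degree grading, so the projection $f \mapsto f_\sigma$ onto a weight space sends homogeneous elements to homogeneous elements of the same degree; in particular a nonzero semi-invariant of a nonzero weight, being orthogonal to the (weight-$0$) constants, has no degree-$0$ part and hence has a nonvanishing homogeneous component of strictly positive degree.

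For the forward direction I would argue directly. Suppose $V$ is $\sigma$-semistable for a nonzero weight $\sigma$, so there is $d \geq 1$ and $f \in \SI(Q,\alpha)_{d\sigma}$ with $f(V) \neq 0$. Since $\sigma \neq 0$ we have $d\sigma \neq 0$, so $f$ has no constant term and some homogeneous component $f^{[e]}$ of positive degree satisfies $f^{[e]}(V) \neq 0$. As $f^{[e]} \in \K[\Rep(Q,\alpha)]^{\SL(\alpha)}$ is a homogeneous $\SL(\alpha)$-invariant of positive degree not vanishing at $V$, we conclude $V \notin \mathcal{N}$.

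For the reverse direction I would start from $V \notin \mathcal{N}$, which furnishes a homogeneous $f \in \K[\Rep(Q,\alpha)]^{\SL(\alpha)}$ of positive degree with $f(V) \neq 0$. Decomposing $f = \sum_\sigma f_\sigma$ into weight components, each $f_\sigma$ is again homogeneous of the same positive degree by the grading-compatibility noted above, and since $f(V) \neq 0$ at least one summand satisfies $f_\sigma(V) \neq 0$. Taking $d = 1$, any such summand with $\sigma \neq 0$ immediately exhibits $\sigma$-semistability of $V$, so the whole lemma reduces to ruling out the possibility that the only nonvanishing summand is the weight-$0$ part $f_0$.

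The main obstacle is precisely this last step, and it is the only place where the hypothesis enters. The component $f_0$ is a genuine $\GL(\alpha)$-invariant; but $Q$ has no oriented cycles, so by the Le Bruyn--Procesi description of quiver invariants (the invariant ring being generated by traces along oriented cycles) there are no non-constant $\GL(\alpha)$-invariants. Since $f_0$ is homogeneous of positive degree, it must therefore be $0$, so $f_0(V) = 0$ and the nonvanishing summand necessarily has a nonzero weight $\sigma$. This produces a nonzero weight $\sigma$ for which $V$ is $\sigma$-semistable, completing the equivalence.
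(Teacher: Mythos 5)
Your proof is correct and follows the same underlying route as the paper, which simply unwinds the identification of the null cone with the common zero set of the semi-invariants of nonzero weights and then invokes the definition of $\sigma$-semistability. The only difference is that you actually justify that identification (killing the weight-$0$ component via Le Bruyn--Procesi and checking that positive-weight semi-invariants contribute positive-degree homogeneous invariants), whereas the paper defers this to an earlier remark derived from Theorem~\ref{si}; both justifications are valid.
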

 
\begin{proof}
We have already remarked that the null cone is the zero set of the semi-invariants of nonzero weights. Thus if a representation $V$ is not in the null cone, then there is a semi-invariant $f \in \SI(Q,\alpha)_\sigma$ for some nonzero weight $\sigma$ such that $f(V) \neq 0$. Consequently for this $\sigma$, $f$ is $\sigma$-semistable. Conversely, if $V \in \Rep(Q,\alpha)$ is $\sigma$-semistable for some nonzero weight $\sigma$, then there is an invariant $f \in \SI(Q,\alpha)_{d\sigma}$ such that $f(V) \neq 0$ for some $d \in \Z_{>0}$. Hence $V$ is not in the null cone. 
\end{proof}

\section{Bounds for the null cone} \label{BNC}
Given a representation $V \in \Rep(Q,\alpha)$, we denote by $\operatorname{C} (V)$, the set of weights $\sigma$ for which $V$ is $\sigma$-semistable, i.e, 
$$
\operatorname{C} (V) = \{ \sigma | V \mbox{ is $\sigma$-semistable}\}.
$$ 
Notice that $\operatorname{C}(V) \subset \Z^{Q_0}$ is cut out by a linear equation $\sigma(\alpha) = 0$ and by linear inequalities $\sigma(\underline{\dim} W) \leq 0$ for proper subrepresentations $W$ of $V$. Let $L$ be an extremal ray of $\operatorname{C}(V)$. It is clear that this extremal ray is defined by degenerating a subset of the linear inequalities to equalities. Hence, there exists a set of subrepresentations $W_i$, $i \in I$ of $V$ such that $\Q L$ is defined by $\sigma(\alpha) = 0$ and $\sigma(\underline{\dim} W_i) = 0$ for $i \in I$. 

Let $\sigma$ be the smallest integral weight on $L$. Then since $\sigma \in \operatorname{C}(V)$, we have that $V$ is $\sigma$-semistable. We can look at a Jordan-H\"older series $0 = V_0 \subsetneq V_1 \subsetneq \dots \subsetneq V_m = V$. The composition factors $V_i/V_{i-1}$ are $\sigma$-stable representations, and let $\alpha_i := \underline{\dim} V_i/V_{i-1}$. Thus, in particular we have $\sigma(\alpha_i) = 0$ for all $i$. It is easy to see that the inequalities $\sigma(\alpha_i) = 0$ will define $\Q L$. However, some of these linear equalities may be redundant. Since these equalities define a $1$-dimensional subspace, we can find a subset of the $\alpha_i$'s of size $n-1$, say $\{\beta_1,\beta_2,\dots,\beta_{n-1}\}$, such that $\sigma(\beta_i) = 0$ for $i = 1,2,\dots, n-1$ defines $\Q L$.

\begin{proposition} \label{abc}
Given a representation $V \in \Rep(Q,\alpha)$ such that $\operatorname{C} (V)$ is non-empty, $V$ is $\sigma$-semistable for some weight $\sigma$ such that each coordinate of $\sigma$ has size $\leq \displaystyle \left(\frac{||\alpha||_1}{n-1}\right)^{n-1}$
\end{proposition}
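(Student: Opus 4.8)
The plan is to combine the structural setup just established — that $\operatorname{C}(V)$ is a rational polyhedral cone whose extremal rays are cut out by equalities $\sigma(\beta_i) = 0$ — with the linear-algebra bound of Proposition~\ref{linear-algebra}. The key observation is that the paragraph preceding the statement has already done the geometric work: since $\operatorname{C}(V)$ is non-empty and lives inside the hyperplane $\sigma(\alpha) = 0$, it has at least one extremal ray $L$, and along that ray we can find dimension vectors $\beta_1, \dots, \beta_{n-1} \in \N^{Q_0}$ (the dimension vectors of Jordan--Hölder composition factors, which are non-negative integral vectors) such that the system $\sigma(\beta_i) = 0$ for $i = 1, \dots, n-1$ cuts out exactly the one-dimensional space $\Q L$. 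So the $\beta_i$ are linearly independent over $\Q$, and their sum is the dimension vector $\underline{\dim} V = \alpha$ (since the composition factors of a Jordan--Hölder filtration sum to $\alpha$, and the chosen $\beta_i$ are a subset whose span contains enough information — I will need to check the sum is exactly $\alpha$ or at most $\alpha$ coordinatewise).

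Second, I would invoke Proposition~\ref{linear-algebra} directly. Assembling the $\beta_i$ as the rows of an $(n-1) \times n$ matrix $M$, linear independence gives $\rank M = n-1$, so $\operatorname{Ker}(M)$ is one-dimensional and equals $\Q L$. The proposition then produces a nonzero integral vector $\vec{u} \in \operatorname{Ker}(M)$ with each coordinate bounded by $(||\vec{v}||_1/(n-1))^{n-1}$, where $\vec{v} = \beta_1 + \dots + \beta_{n-1}$. Taking $\sigma$ to be this $\vec{u}$ (up to sign, so that it lies in the correct half of the ray inside $\operatorname{C}(V)$), we get a weight on $L$ with the desired coordinate bound, and since $\sigma \in \Q L \subseteq \operatorname{C}(V)$, the representation $V$ is $\sigma$-semistable by definition of $\operatorname{C}(V)$.

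\emph{The main obstacle} is controlling $||\vec{v}||_1$ in terms of $||\alpha||_1$, so that the final bound reads $(||\alpha||_1/(n-1))^{n-1}$ rather than something in terms of the auxiliary $\beta_i$. Since each $\beta_i$ is the dimension vector of a composition factor, the full set of composition factors sums to $\alpha$ coordinatewise; the chosen subset $\{\beta_1, \dots, \beta_{n-1}\}$ therefore satisfies $\sum_i \beta_i \leq \alpha$ coordinatewise (all entries non-negative), whence $||\vec{v}||_1 = \sum_i ||\beta_i||_1 \leq ||\alpha||_1$. This coordinatewise domination is the crucial point that makes the bound clean, and it is exactly why the non-negativity of the $\beta_i$ (guaranteed because they are genuine dimension vectors) matters. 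Plugging $||\vec{v}||_1 \leq ||\alpha||_1$ into the conclusion of Proposition~\ref{linear-algebra} yields $|\sigma_i| \leq (||\alpha||_1/(n-1))^{n-1}$ for every coordinate, completing the proof.

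One subtlety I would flag and verify is the sign: Proposition~\ref{linear-algebra} gives a generator of the kernel but not necessarily the one lying in $\operatorname{C}(V)$; however, $\operatorname{C}(V)$ being a cone means $L$ is a half-line, so exactly one of $\pm\vec{u}$ lies in $\operatorname{C}(V)$, and replacing $\vec{u}$ by $-\vec{u}$ if needed preserves all coordinate bounds. With that settled, $V$ is $\sigma$-semistable for this bounded weight, which is the claim.
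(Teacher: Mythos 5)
Your proposal is correct and follows essentially the same route as the paper: take an extremal ray $L$ of $\operatorname{C}(V)$, realize $\Q L$ as the kernel of the $(n-1)\times n$ matrix whose rows are the composition-factor dimension vectors $\beta_i$, and apply Proposition~\ref{linear-algebra}. The two details you flag — that $\sum_i \beta_i$ may only be dominated coordinatewise by $\alpha$ (which still gives the bound since it is monotone in $||\vec{v}||_1$), and the choice of sign on the ray — are handled correctly and are in fact slightly more carefully spelled out than in the paper's own two-line proof.
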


\begin{proof}
Observe that $\Q L$ is the kernel of the $(n-1) \times n$ matrix whose rows are the dimension vectors $\beta_i$. Since $\sigma$ is the smallest integral vector in $\Q L$, we have that each coordinate of $\sigma$ is bounded by $\displaystyle \left(\frac{||\alpha||_1}{n-1}\right)^{n-1}$, by Proposition~\ref{linear-algebra}.
\end{proof}

We can now translate this into a bound for $|\sigma|_\alpha$.

\begin{corollary} \label{sigma-bound}
Given a representation $V \in \Rep(Q,\alpha)$ such that $\operatorname{C} (V)$ is non-empty, $V$ is $\sigma$-semistable for some weight $\sigma$ such that $$|\sigma|_\alpha \leq \displaystyle \left(\frac{||\alpha||_1}{n-1}\right)^{n-1}\left(\frac{||\alpha||_1}{2}\right) = \frac{||\alpha||_1^n}{2(n-1)^{n-1}} .$$
\end{corollary}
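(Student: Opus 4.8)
The plan is to bound $|\sigma|_\alpha$ directly from the coordinatewise bound on $\sigma$ established in Proposition~\ref{abc}, together with the defining identity $|\sigma|_\alpha = \sigma_+ \cdot \alpha$. Recall that $\sigma_+$ is obtained from $\sigma$ by keeping the positive coordinates and zeroing out the rest, so each coordinate of $\sigma_+$ is at most $\left(\frac{||\alpha||_1}{n-1}\right)^{n-1}$ as well. The key observation is that we should factor out this uniform coordinate bound and be left with estimating $\sum_{x : \sigma(x) > 0} \alpha(x)$, which is at most $||\alpha||_1$ but can be improved using the constraint $\sigma \cdot \alpha = 0$.

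First I would write
\[
|\sigma|_\alpha = \sum_{x : \sigma(x) > 0} \sigma(x)\alpha(x) \leq \left(\frac{||\alpha||_1}{n-1}\right)^{n-1} \sum_{x : \sigma(x) > 0} \alpha(x).
\]
The remaining task is to show that $\sum_{x : \sigma(x) > 0} \alpha(x) \leq \frac{||\alpha||_1}{2}$, which would complete the proof upon multiplying the two bounds. This is precisely where the balance condition enters: since $\sigma \cdot \alpha = 0$, we have $\sigma_+ \cdot \alpha = \sigma_- \cdot \alpha = |\sigma|_\alpha$, but for the crude count of dimensions I instead use that the positive-weight vertices and the negative-weight vertices partition a subset of $Q_0$, so whichever of $\sum_{\sigma(x)>0}\alpha(x)$ and $\sum_{\sigma(x)<0}\alpha(x)$ is smaller is at most half of $||\alpha||_1$.

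The mild obstacle is justifying the factor of $\frac{1}{2}$ cleanly. Since $|\sigma|_\alpha = \sigma_+\cdot\alpha = \sigma_-\cdot\alpha$, I may replace $\sigma$ by $-\sigma$ (which does not change whether $V$ is $\sigma$-semistable, by Theorem~\ref{stability}, and does not change $|\sigma|_\alpha$) so as to assume without loss of generality that $\sum_{x : \sigma(x) > 0} \alpha(x) \leq \sum_{x : \sigma(x) < 0} \alpha(x)$. Because these two sums are over disjoint vertex sets and together are bounded by $\sum_{x \in Q_0} \alpha(x) = ||\alpha||_1$, the smaller of the two is at most $\frac{||\alpha||_1}{2}$. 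Substituting this back gives
\[
|\sigma|_\alpha \leq \left(\frac{||\alpha||_1}{n-1}\right)^{n-1}\left(\frac{||\alpha||_1}{2}\right) = \frac{||\alpha||_1^n}{2(n-1)^{n-1}},
\]
which is exactly the claimed bound, and the final equality is a routine simplification. The whole argument is elementary given Proposition~\ref{abc}; the only point requiring care is the symmetric use of $\sigma$ versus $-\sigma$ to harvest the factor of one half.
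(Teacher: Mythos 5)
Your overall strategy follows the paper's, but the step where you replace $\sigma$ by $-\sigma$ is a genuine error: $\sigma$-semistability is \emph{not} preserved under negating the weight. By King's criterion (Theorem~\ref{stability}), $\sigma$-semistability of $V$ requires $\sigma(\underline{\dim}\,W)\le 0$ for all subrepresentations $W$, whereas $(-\sigma)$-semistability requires $\sigma(\underline{\dim}\,W)\ge 0$ for all subrepresentations; these are different conditions. The paper's own $2$-Kronecker example makes this concrete: the representation $V$ of dimension vector $(1,2)$ is $\sigma$-semistable exactly for $\sigma\in\Z_{>0}(2,-1)$, so it is $(2,-1)$-semistable but not $(-2,1)$-semistable. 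Hence you cannot ``assume without loss of generality'' that the positive-weight vertices carry the smaller total dimension by flipping the sign of $\sigma$, and as stated your justification of the factor $\tfrac12$ does not go through (indeed, for $\sigma=(1,-2)$ and $\alpha=(2,1)$ one has $\sum_{\sigma(x)>0}\alpha(x)=2>\tfrac12\|\alpha\|_1$).

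The good news is that the flip is unnecessary, because you never need to change the weight: since $\sigma\cdot\alpha=0$, you have $|\sigma|_\alpha=\sigma_+\cdot\alpha=\sigma_-\cdot\alpha$, so $|\sigma|_\alpha$ is bounded both by $M\sum_{\sigma(x)>0}\alpha(x)$ and by $M\sum_{\sigma(x)<0}\alpha(x)$ with $M=\left(\frac{\|\alpha\|_1}{n-1}\right)^{n-1}$; the smaller of these two sums over disjoint vertex sets is at most $\tfrac12\|\alpha\|_1$, which yields the stated bound for the original $\sigma$. This repaired argument is essentially the paper's proof in disguise: the paper simply sums the coordinatewise bound $|\sigma_i|\alpha_i\le M\alpha_i$ over all $i$ and observes that $\sum_i|\sigma_i|\alpha_i=\sigma_+\cdot\alpha+\sigma_-\cdot\alpha=2|\sigma|_\alpha$, whence $|\sigma|_\alpha\le\tfrac12 M\|\alpha\|_1$.
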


\begin{proof}
If every coordinate $|\sigma_i| \leq M$ for some $M$, then we have $|\sigma_i| \alpha_i \leq M \alpha_i$. Note further that since $\sigma(\alpha) = 0$, we have $\sum\limits_{i=1}^n  |\sigma_i| \alpha_i = \sigma_+ \cdot \alpha + \sigma_{-} \cdot \alpha = 2 |\sigma|_\alpha$. Thus $|\sigma|_\alpha \leq \frac{1}{2} M ||\alpha||_1$.
\end{proof}

\begin{proof}[Proof of Theorem~\ref{null cone bound}]
Given $V \in \Rep(Q,\alpha)$ which is not in the null cone, we have that $C(V)$ is nonzero by Lemma~\ref{obvious}. Hence there exists some $\sigma$ with $|\sigma|_\alpha \leq \frac{||\alpha||_1^n}{2(n-1)^{n-1}} $, such that $V$ is $\sigma$-semistable. Then by the first part of Proposition~\ref{existssemi}, there is an semi-invariant $f \in \SI(Q,\alpha)_{d\sigma}$ that does not vanish on $V$ for each $d \geq |\sigma|_\alpha - 1$. Observe that $|d\sigma|_\alpha = d|\sigma|_\alpha$. Taking $d = |\sigma|_\alpha$ gives the required conclusion. 
\end{proof}

\begin{remark} \label{extremal rays}
It might seem very wasteful to find bounds using an extremal ray $L$, as it is very likely that smaller weights lie in the interior of $\operatorname{C}(V)$. However, observe that if $\sigma$ is an integral weight on an extremal ray $L$ of $\operatorname{C}(V)$, then for $\operatorname{gr}_\sigma(V)$ we have $\operatorname{C}(\operatorname{gr}_\sigma(V)) = L$. Hence these extremal rays cannot be avoided. 
\end{remark}

\section{Bounds for generating semi-invariants} \label{BGS}
The ring $\SI(Q,\alpha)$ has two natural gradings. We have the weight space decomposition $\SI(Q,\beta) = \bigoplus_\sigma \SI(Q,\alpha)_\sigma$. We also have the natural grading inherited from viewing $\K[\Rep(Q,\alpha)]$ as a polynomial ring. While the weight space decomposition is the more interesting one, all the results from Computational Invariant Theory hold for the latter grading. In the previous section, we found bounds for invariants defining the null cone in terms of the weight space decomposition. In order to use Theorem~\ref{beta-gamma}, we must switch to the latter grading.


\begin{lemma}\label{gradings}
Let $f \in \SI(Q,\alpha)_\sigma$, then its homogeneous components are non-trivial only for degrees between $|\sigma|_\alpha$ and $n |\sigma|_\alpha$.
\end{lemma}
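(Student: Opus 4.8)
The plan is to understand precisely how the weight $\sigma$ relates to the total degree of a semi-invariant $f \in \SI(Q,\alpha)_\sigma$. By Theorem~\ref{si}, every element of $\SI(Q,\alpha)_\sigma$ is a linear combination of polynomials of the form $\det(t_1 X_1 + \cdots + t_m X_m)$, where the linear matrix $A = t_1X_1 + \cdots + t_mX_m$ has size $N \times N$ with $N = \sigma_+ \cdot \alpha = \sigma_- \cdot \alpha = |\sigma|_\alpha$. Here each entry of $A$ is a linear combination of coordinate functions on $\Rep(Q,\alpha)$ indexed by the arrow-weights $t_i$; crucially the entry in the block $\Hom(V(x),V(y))$ is a $K$-linear combination of the functions $V(p_1),\dots,V(p_r)$ corresponding to \emph{paths} $p_j$ from $x$ to $y$. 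The total degree of $f$ as a polynomial in the matrix coordinates is thus governed by how long these paths are, since a path of length $\ell$ is a product of $\ell$ arrow-matrices and so contributes degree $\ell$ in the coordinate functions.

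First I would observe that $\det(A)$ is a homogeneous polynomial of degree $N = |\sigma|_\alpha$ in the entries of $A$, because the determinant of an $N \times N$ matrix is homogeneous of degree $N$ in its entries. So if we assigned each entry degree $1$, the determinant has degree $|\sigma|_\alpha$. The point is that each entry of $A$ is itself homogeneous in the matrix coordinates: an entry living in the block $\Hom(V(x),V(y))$ is a combination of path functions, all of the same total degree equal to the common length of those paths. In the expansion $\det(A) = \sum_{\tau} \sgn(\tau) \prod_i A_{i,\tau(i)}$, each surviving monomial picks one entry from each row and column, and the total degree of that monomial in the matrix coordinates equals the sum of the path-lengths of the $N$ chosen entries. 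Hence the lower bound: since the quiver has no oriented cycles and each block corresponds to paths from one vertex to another, the shortest possible path contributing to a nonzero entry has length at least $1$, giving total degree at least $N = |\sigma|_\alpha$ — this establishes that homogeneous components below degree $|\sigma|_\alpha$ vanish.

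Next I would establish the upper bound of $n|\sigma|_\alpha$. Because $Q$ has no oriented cycles and $|Q_0| = n$, every path in $Q$ visits each vertex at most once, so the maximum length of any path is $n-1$, and thus each entry of $A$ has total degree at most $n-1$ in the matrix coordinates. Consequently each of the $N = |\sigma|_\alpha$ factors in any monomial of $\det(A)$ contributes at most $n-1$ to the total degree, giving total degree at most $(n-1)|\sigma|_\alpha$, which is comfortably below the claimed bound $n|\sigma|_\alpha$. (In fact this shows the statement with the slightly sharper constant $n-1$; I would likely record the bound as stated, $n|\sigma|_\alpha$, since it is what is needed downstream and is cleaner.) Since $\SI(Q,\alpha)_\sigma$ is spanned by such determinants, every element has all homogeneous components confined to the range $[|\sigma|_\alpha, \, (n-1)|\sigma|_\alpha] \subseteq [|\sigma|_\alpha, \, n|\sigma|_\alpha]$.

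The main obstacle I anticipate is making the degree-bookkeeping on the entries fully rigorous, in particular confirming that every entry of the block $\Hom(V(x),V(y))$ is genuinely homogeneous of a single total degree rather than a mixture of different degrees. This holds because the construction in Theorem~\ref{si} places in that block precisely the combination $t_1 V(p_1) + \cdots + t_r V(p_r)$ where $p_1,\dots,p_r$ are \emph{all} paths from $x$ to $y$, and a priori these paths may have different lengths, so the entry need not be homogeneous. The resolution is that one should track degrees block by block: after substituting specific scalars for the $t_i$ and expanding the determinant, each monomial in the matrix coordinates still corresponds to a choice of one path through each selected block, and every such path has length between $1$ and $n-1$; summing over the $N$ blocks selected by a permutation yields a total degree in $[N, (n-1)N]$. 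Thus even though individual entries mix degrees, the determinant's monomials each have total degree trapped in the required interval, which suffices. I would make this explicit by arguing directly at the level of monomials in the permutation expansion rather than trying to assign a single degree to each entry.
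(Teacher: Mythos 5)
Your proof is correct and follows essentially the same route as the paper: both use Theorem~\ref{si} to reduce to determinants of the $|\sigma|_\alpha \times |\sigma|_\alpha$ linear matrix and then observe that each entry, being a combination of path functions between distinct vertices, has homogeneous components only in degrees $1$ through $n$ (your sharper observation that acyclicity forces path length at most $n-1$ is a minor improvement the paper does not bother to record). Your extra care about entries mixing degrees is exactly the point the paper handles by speaking of the entries' homogeneous components rather than assigning each entry a single degree.
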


\begin{proof}
A set of semi-invariants spanning $f \in \SI(Q,\alpha)_\sigma$ was given in Theorem~\ref{si}. A semi-invariant in this set is given by the determinant of a matrix, whose size is $|\sigma|_\alpha$. The matrix is described in block form, where each block defines a linear combinations of paths between two different vertices. Such paths have length at least $1$ and at most $n$. Hence the entries of this matrix are polynomials whose homogeneous components are non-trivial only for degrees between $1$ and $n$. 
\end{proof}

The above lemma can then be used to convert the bounds given in Theorem~\ref{null cone bound} with respect to weight spaces to one in the total degree grading. 

\begin{corollary}
The null cone for the action of $\SL_\alpha$ on $\Rep(Q,\alpha)$ is defined by homogeneous invariants of degree $\leq \displaystyle \frac{n ||\alpha||_1^{2n}}{4(n-1)^{2n-2}} $, i.e.,  
$$
\gamma(\SI(Q,\alpha)) \leq \displaystyle \frac{n ||\alpha||_1^{2n}}{4(n-1)^{2n-2}}.
$$

\end{corollary}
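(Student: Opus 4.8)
The plan is to combine the null cone bound from Theorem~\ref{null cone bound}, which is phrased in terms of $|\sigma|_\alpha$ for the weight space decomposition, with Lemma~\ref{gradings}, which relates the weight $\sigma$ of a semi-invariant to the range of total degrees in which its homogeneous components can be nonzero. By Theorem~\ref{null cone bound}, the null cone is cut out by semi-invariants of nonzero weights $\sigma$ satisfying $|\sigma|_\alpha \leq \frac{||\alpha||_1^{2n}}{4(n-1)^{2n-2}}$. The goal is to show these same defining equations can be taken to have total degree at most $\frac{n||\alpha||_1^{2n}}{4(n-1)^{2n-2}}$, which is exactly an extra factor of $n$.

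The key step is to apply Lemma~\ref{gradings} to each such semi-invariant. First I would take a semi-invariant $f \in \SI(Q,\alpha)_\sigma$ that helps define the null cone, with $\sigma$ a nonzero weight. By Lemma~\ref{gradings}, every homogeneous component of $f$ (with respect to the total degree grading) lives in degrees between $|\sigma|_\alpha$ and $n|\sigma|_\alpha$. Since $f$ is itself a semi-invariant of weight $\sigma$ (and hence each graded piece is too, as the two gradings are compatible with the $\SL_\alpha$-action), the defining property of the null cone is preserved by passing to homogeneous components: the collection of all homogeneous components of all such $f$ still cuts out the null cone. Therefore the null cone is defined by homogeneous invariants of total degree at most $n|\sigma|_\alpha \leq n \cdot \frac{||\alpha||_1^{2n}}{4(n-1)^{2n-2}}$, which is precisely the claimed bound. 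Finally I would note that, by definition, $\gamma(\SI(Q,\alpha))$ is the smallest degree $D$ such that homogeneous invariants of degree $\leq D$ define the null cone, so this yields the stated inequality on $\gamma$.

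I expect the only genuinely delicate point to be the justification that passing to homogeneous components preserves the property of defining the null cone, and that these components remain semi-invariants of the same weight $\sigma$. This relies on the observation (already used implicitly when setting up the two gradings) that the total degree grading and the weight grading on $\SI(Q,\alpha)$ are compatible, so that decomposing $f$ by total degree does not destroy its weight homogeneity. Since a weight-$\sigma$ semi-invariant $f$ vanishes on the null cone exactly when all its homogeneous pieces do, and these pieces have controlled degrees by Lemma~\ref{gradings}, the bound transfers directly with the factor of $n$. The remaining arithmetic is immediate from the stated bound in Theorem~\ref{null cone bound}.
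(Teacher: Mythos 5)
Your argument is correct and matches the paper's (implicit) proof exactly: the paper likewise obtains this corollary by combining Theorem~\ref{null cone bound} with Lemma~\ref{gradings}, using the fact that the homogeneous components of a weight-$\sigma$ semi-invariant are themselves semi-invariants of degree at most $n|\sigma|_\alpha$ and still cut out the null cone. The one point you flag as delicate --- compatibility of the two gradings --- is indeed the right thing to check, and your justification of it is sound.
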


Finally, we can apply Theorem~\ref{beta-gamma} to get:
\begin{corollary}\label{poly grade bounds}
Assume $\kar K =0$ and let $r = \dim(\SI(Q,\alpha))$. The ring of semi-invariants $\SI(Q,\alpha)$ is generated by invariants of degree $\leq \displaystyle \frac{3}{8} r \left(  \frac{n ||\alpha||_1^{2n}}{4(n-1)^{2n-2}}\right)^2$.
\end{corollary}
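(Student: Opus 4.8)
The plan is to obtain the corollary as a direct application of Theorem~\ref{beta-gamma} to the pair $(V,G) = (\Rep(Q,\alpha),\SL_\alpha)$, combined with the null-cone bound on $\gamma(\SI(Q,\alpha))$ established just above. First I would check that the hypotheses of Theorem~\ref{beta-gamma} are in force. The group $\SL_\alpha = \prod_{x \in Q_0}\SL_{\alpha(x)}$ is a product of special linear groups, hence a connected semisimple --- in particular reductive --- algebraic group acting rationally on the finite-dimensional vector space $\Rep(Q,\alpha)$. Since $\kar K = 0$, reductivity coincides with linear reductivity, so Theorem~\ref{beta-gamma} applies, and the relevant invariant ring $\K[\Rep(Q,\alpha)]^{\SL_\alpha}$ is exactly $\SI(Q,\alpha)$.

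Next I would line up the relevant quantities. By definition $\beta(\SI(Q,\alpha))$ is the least degree $D$ with the property that invariants of degree $\leq D$ generate the ring, so a bound on $\beta(\SI(Q,\alpha))$ is precisely what the corollary asserts. The integer $r = \dim(\SI(Q,\alpha))$ is the Krull dimension of the invariant ring, which is the parameter denoted $r$ in Theorem~\ref{beta-gamma}. That theorem therefore gives
$$\beta(\SI(Q,\alpha)) \leq \max\Bigl\{2,\ \tfrac{3}{8}\,r\,\bigl(\gamma(\SI(Q,\alpha))\bigr)^2\Bigr\}.$$
I would then substitute the bound $\gamma(\SI(Q,\alpha)) \leq \frac{n\,||\alpha||_1^{2n}}{4(n-1)^{2n-2}}$ obtained in the corollary immediately preceding. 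Because the map $x \mapsto \tfrac{3}{8}r\,x^2$ is nondecreasing for $x \geq 0$, replacing $\gamma(\SI(Q,\alpha))$ by its upper bound can only enlarge the right-hand side, which yields the stated estimate.

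The one point requiring a remark is the constant $2$ inside the maximum. In any nondegenerate case the quadratic term dominates $2$ (for instance already when $n \geq 2$ and $||\alpha||_1 \geq 2$), so the maximum is harmless; in the remaining degenerate cases $\SI(Q,\alpha)$ reduces to the constants and has no positive-degree generators, so the asserted bound holds vacuously. I do not expect a genuine obstacle here: all the mathematical content has been placed in Theorem~\ref{beta-gamma} and in the $\gamma$-bound just proved, and this step is merely their routine combination. The only care needed is the verification of linear reductivity in characteristic $0$ and the trivial monotonicity argument licensing the substitution into the quadratic.
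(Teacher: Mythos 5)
Your argument is exactly the paper's: the corollary is stated as an immediate application of Theorem~\ref{beta-gamma} to $G=\SL_\alpha$ acting on $\Rep(Q,\alpha)$, substituting the bound $\gamma(\SI(Q,\alpha)) \leq \frac{n||\alpha||_1^{2n}}{4(n-1)^{2n-2}}$ from the preceding corollary. Your additional checks (linear reductivity in characteristic $0$, monotonicity of the quadratic, and the harmlessness of the $\max\{2,\cdot\}$) are all correct and merely make explicit what the paper leaves implicit.
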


\begin{proof}[Proof of Theorem~\ref{main}]
This follows from Lemma~\ref{gradings} and Corollary~\ref{poly grade bounds}.
\end{proof}

\begin{remark}
The bounds given in Theorem~\ref{main} depend on $\dim(\SI(Q,\alpha))$. Kac gave a formula (see \cite{Kac}) for $\dim(\SI(Q,\alpha))$ in terms of the canonical decomposition. There is in fact an efficient algorithm to compute the canonical decomposition due to the first author and Weyman, see \cite{DW1}. More importantly, as remarked in the introduction, $\dim(\SI(Q,\alpha))$ is bounded by $\dim(\Rep(Q,\alpha)) = \sum_{a \in Q_1} \alpha(ha) \alpha(ta)$. 
\end{remark}

\section{Removing dependence on $\dim \SI(Q,\alpha)$} \label{RD}
The bounds in Theorem~\ref{main} depend on $|Q_0| = n$,  $\alpha$ and $\dim(\SI(Q,\alpha))$. Note that $\dim(\SI(Q,\alpha))$ depends on $Q_1$. We now show how one can use Weyl's theorem on polarization of invariants to remove the dependence on $\dim(\SI(Q,\alpha))$, and get a bound which is purely in terms of $|Q_0| = n$ and $\alpha$.

Given a quiver $Q = (Q_0,Q_1)$ with no oriented cycles, we can label the vertices $1,2,\dots,n$ so that for every arrow, $ta < ha$. Let $n(i,j)$ denote the number of arrow with tail $i$ and head $j$. Fix a dimension vector $\alpha = (\alpha_1,\alpha_2,\dots,\alpha_n)$. Now, observe that 
$$
\Rep(Q,\alpha) = \bigoplus_{i < j} \M_{\alpha_j,\alpha_i}^{ n(i,j)}.
$$  

Observe further that each $\M_{\alpha_j,\alpha_i}$ is a representation of $\GL_\alpha$ as well as $\SL_\alpha$. Observe that $\dim \M_{\alpha_j,\alpha_i} = \alpha_i\alpha_j$. Hence, as a consequence of Weyl's theorem on polarization of invariants (see \cite[II.5, Theorem~2.5A]{Weyl} and \cite[Section~7.1,Theorem~A]{KP}), we can obtain the semi-invariant ring $\SI(Q,\alpha)$ by the process of polarization from $\K[\bigoplus\limits_{i < j} \M_{\alpha_j,\alpha_i}^{ \alpha_i\alpha_j} ]^{\SL_\alpha}$. See also \cite[Theorem~0.1]{DKW} for a version that is better suited to our situation. In other words, for the purposes of finding a bound on the generating invariants, we can assume $n(i,j) = \alpha_i\alpha_j$.

 Define a quiver $\widetilde{Q}$ whose vertex set is $1,2,\dots,n$, and has $\alpha_i\alpha_j$ arrows from $i$ to $j$. The above discussion can be summarized as follows:

\begin{proposition}
Assume $\kar K = 0$, then we have 
$$
\beta(\SI(Q,\alpha)) \leq \beta(\SI(\widetilde{Q},\alpha)).
$$
\end{proposition}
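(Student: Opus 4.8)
The plan is to apply Weyl's theorem on polarization of invariants, which relates the invariant ring of a single copy of a representation to the invariant ring of many copies. The key structural observation, already recorded in the excerpt, is the decomposition
$$
\Rep(Q,\alpha) = \bigoplus_{i < j} \M_{\alpha_j,\alpha_i}^{n(i,j)},
$$
so that $\Rep(Q,\alpha)$ is built from $n(i,j)$ copies of the $\GL_\alpha$-representation $\M_{\alpha_j,\alpha_i}$ for each ordered pair $i<j$. The quiver $\widetilde{Q}$ is designed so that $\Rep(\widetilde{Q},\alpha)$ has exactly $\alpha_i\alpha_j = \dim \M_{\alpha_j,\alpha_i}$ copies of each such block. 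Since $\SL_\alpha$ acts on each block $\M_{\alpha_j,\alpha_i}$ as a rational representation, we are precisely in the setting where Weyl's polarization theorem applies.

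The main step is to invoke the cited form of Weyl's theorem (see \cite[II.5, Theorem~2.5A]{Weyl}, \cite[Section~7.1, Theorem~A]{KP}, and the sharper \cite[Theorem~0.1]{DKW}): for a rational representation $W$ of a reductive group, the invariants of $\bigoplus_{k=1}^{N} W$ are obtained by polarization from the invariants of $\bigoplus_{k=1}^{\dim W} W$, and crucially \emph{polarization does not raise the degree of a generating invariant}. Applying this simultaneously to each block $W = \M_{\alpha_j,\alpha_i}$, with $N = n(i,j)$ on one side and $\dim W = \alpha_i\alpha_j$ on the other, we conclude that every generator of $\SI(Q,\alpha)$ is a polarization of a generator of $\SI(\widetilde{Q},\alpha)$ of the same or smaller degree. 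Hence a generating set of $\SI(\widetilde{Q},\alpha)$ in degree $\leq \beta(\SI(\widetilde{Q},\alpha))$ polarizes to a generating set of $\SI(Q,\alpha)$ in degree $\leq \beta(\SI(\widetilde{Q},\alpha))$, which is exactly the asserted inequality $\beta(\SI(Q,\alpha)) \leq \beta(\SI(\widetilde{Q},\alpha))$.

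I expect the main obstacle to be a bookkeeping issue rather than a conceptual one: the classical statement of Weyl's theorem concerns a single representation copied many times, whereas here we must polarize several distinct blocks $\M_{\alpha_j,\alpha_i}$ at once. One must check that polarization can be performed block-by-block independently, so that the degree bound is preserved uniformly across all blocks, and that the case $n(i,j) > \alpha_i\alpha_j$ versus $n(i,j) < \alpha_i\alpha_j$ is handled correctly (polarization increases the number of copies, restriction decreases it). Here the formulation in \cite[Theorem~0.1]{DKW}, which the authors flag as ``better suited to our situation,'' presumably handles the multi-block case and the degree-preservation claim directly, so the cleanest route is to cite it for the degree statement and simply unwind the definition of $\beta$. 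Once the degree-preservation of polarization is granted, the inequality is immediate from the definition of $\beta(\K[V]^G)$ as the smallest degree in which the invariants generate.
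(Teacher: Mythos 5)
Your proposal is correct and follows essentially the same route as the paper: the paper's ``proof'' is precisely the preceding discussion, which decomposes $\Rep(Q,\alpha)$ into the blocks $\M_{\alpha_j,\alpha_i}^{n(i,j)}$, notes $\dim \M_{\alpha_j,\alpha_i}=\alpha_i\alpha_j$, and invokes Weyl's polarization theorem (with the reference to \cite[Theorem~0.1]{DKW} for the multi-block setting) together with the fact that polarization does not increase degree. Your flagged concerns about block-by-block polarization and the direction of the inequality between $n(i,j)$ and $\alpha_i\alpha_j$ are exactly the points the paper delegates to that citation.
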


\begin{proof}[Proof of Corollary~\ref{independent}]
For $\widetilde{Q}$, we have:
\begin{align*}
\dim(\SI(\widetilde{Q},\alpha)) & \leq \dim \Rep(\widetilde{Q},\alpha) \\
 &  = \sum\limits_{i < j} \alpha_i \alpha_j \\ 
  & = \displaystyle \frac{||\alpha||_1^2 - ||\alpha||_2^2}{2}.\\
\end{align*}
Now, use this bound for $r$ in Corollary~\ref{poly grade bounds}, and apply Lemma~\ref{gradings}. 
\end{proof}

\section{Exponential lower bound} \label{lowerbound}
We first recall some results on the $2$-Kronecker quiver, the quiver with 2 vertices $x$ and $y$ and two arrows $a,b$ from $x$ to $y$. 

\begin{center}
\begin{tikzpicture}

\path node (x) at (0,0) []{$x$};
\path node (y) at (2,0) []{$y$};

\draw[->] ([yshift=0.1 cm]x.east) -- node[midway,above]{$a$}([yshift=0.1 cm]y.west) ;
\draw[->] ([yshift=-0.1 cm]x.east) -- node[midway,below]{$b$} ([yshift=-0.1 cm]y.west) ;

\end{tikzpicture}
\end{center}

We look at two particular indecomposable representations. The representation 
\begin{center}
\begin{tikzpicture}

\path node at (-0.75,0) {$V = $};
\path node (x) at (0,0) []{$\K$};
\path node (y) at (2,0) []{$\K^2$};

\draw[->] ([yshift=0.1 cm]x.east) -- node[midway,above]{$\left(\begin{smallmatrix} 1 \\0 \end{smallmatrix}\right)$}([yshift=0.1 cm]y.west) ;
\draw[->] ([yshift=-0.1 cm]x.east) -- node[midway,below] {$\left(\begin{smallmatrix} 0 \\1 \end{smallmatrix}\right)$} ([yshift=-0.1 cm]y.west) ;

\end{tikzpicture}
\end{center}

is an indecomposable representation for the dimension vector $(1,2)$. It is easy to check that $V$ is $\sigma$-semistable precisely when $\sigma \in \Z_{> 0} (2,-1)$. Similarly, the representation
\begin{center}
\begin{tikzpicture}

\path node at (-0.75,0) {$W = $};

\path node (x) at (0,0) []{$\K^2$};
\path node (y) at (2,0) []{$\K$};

\draw[->] ([yshift=0.1 cm]x.east) -- node[midway,above]{$\left(\begin{smallmatrix} 1& 0 \end{smallmatrix}\right)$}([yshift=0.1 cm]y.west) ;
\draw[->] ([yshift=-0.1 cm]x.east) -- node[midway,below] {$\left(\begin{smallmatrix} 0& 1 \end{smallmatrix}\right)$} ([yshift=-0.1 cm]y.west) ;

\end{tikzpicture}
\end{center}

is an indecomposable representation for the dimension vector $(2,1)$. Once again, it is easy to check that $W$ is $\sigma$-semistable precisely when $\sigma \in \Z_{> 0} (1,-2)$.

\begin{proof} [Proof of Proposition~\ref{expbound}]
Consider the quiver $Q_n$, and observe that the odd vertices are sources and the even vertices are sinks. For any $i \in \{1,2\dots,n-1\}$, one of $i$ and $i+1$ is a source and the other is a sink. Let $\psi_i$ be the embedding of the $2$-Kronecker quiver, that maps the vertices to $i$ and $i+1$, with source begin mapped to source and sink to sink. Under this embedding, we see that $\psi_i(V)$ and $\psi_i(W)$ are indecomposable representations of the quiver $Q_n$. We consider the representation 
\begin{align*}
R &= \psi_1(V) \oplus \psi_2(W) \oplus \psi_3(V) \cdots \\
&=  \bigoplus\limits_{i \text{ odd}} \psi_i(V) \oplus \bigoplus\limits_{i \text{ even}} \psi_i(W).
\end{align*}

We have $\dim(R) = (2,3,3,\dots,3,1)$. Moreover, $R$ is $\sigma$-semistable for the indivisible integral weight $\sigma = (-1,2,-4,8,\dots)$. Since $R$ is a direct sum of indecomposables, it suffices to check $\sigma$-semistability of these indecomposables. That each of these indecomposables is $\sigma$-semistable follows from the above discussion above on $2$-Kronecker quivers. Thus, in particular, $\operatorname{C}(R)$ is non-empty, and $R$ is not in the null cone. 

Moreover, we have that $R$ is a direct sum of $n-1$ indecomposables, and their dimension vectors are linearly independent vectors, and hence it follows from King's stability conditions that $\operatorname{C}(R)$ is at most $1$-dimensional. Since $\operatorname{C}(R)$ is non-empty, and $(-1,2,-4,8,\dots)$ is indivisible, we have that $\operatorname{C}(R) = \Z_{\geq 0} (-1,2,-4,8,\dots)$. More concretely, we have the condition that $\sigma \in \operatorname{C}(R)$, then $\sigma$ is in the kernel of 

$$
\begin{pmatrix}
2 & 1 &  &  & \\
 & 2 & 1 & &  \\
 & & \ddots & \ddots & \\
 & & & 2 & 1 \\
\end{pmatrix}.
$$

The kernel of the above matrix is precisely $\Q$-span of $(-1,2,-4,8,\dots)$, and the smallest integral vector in this $1$-dimensional subspace is $(-1,2,-4,8,\dots)$ by virtue of being indivisible. For the weight $\sigma =  (-1,2,-4,8,\dots)$, we get $|\sigma|_\alpha = 2^n - 2$ by computation. Thus in this case, the semi-invariants of weights $\sigma$ with $|\sigma|_\alpha < 2^n - 2$ do not define the null cone. 

\end{proof}

\begin{remark}
For any given quiver, one might be able to generate stronger bounds by improving the estimates we make at various stages of obtaining our bounds. 
\end{remark}

\end{document}